\newtheorem{theorem}{Theorem}
\newtheorem{lemma}[theorem]{Lemma}
\newtheorem{example}{Example}
\def\qi#1 {\fbox {\footnote {\ }}\ \footnotetext { From Qi: {\color{red}#1}}}
\begin{document}
\title[Infinite families of $2$-designs from linear codes]{Infinite families of $2$-designs from a class of linear codes related to Dembowski-Ostrom functions}
\author{Rong Wang}
\address{College of Mathematics and Statistics, Northwest Normal University, Lanzhou, Gansu 730070, China}
\curraddr{}
\email{rongw113@126.com}
\author{Xiaoni Du}
\address{College of Mathematics and Statistics, Northwest Normal University, Lanzhou, Gansu 730070, China}
\curraddr{}
\email{ymldxn@126.com}
\author{Cuiling Fan}
\address{ School of Mathematics, Southwest Jiaotong University, Chengdu, Sichuan 610000, China}
\curraddr{}
\email{cuilingfan@163.com}
\author{Zhihua Niu}
\address{School of Computer Engineering and Science, Shanghai University, Shanghai, 200444, China}
\curraddr{}
\email{zhniu@staff.shu.edu.cn}

\thanks{}
\date{\today}

\maketitle

\begin{abstract}
Due to their important applications  to  
coding theory, cryptography, communications and statistics,  combinatorial $t$-designs have been attracted lots of research interest for decades.
The interplay between coding theory and $t$-designs has on going for many years. As we all known, $t$-designs can be used to derive linear codes over any finite field, as well as the supports of all codewords with a fixed weight in a code also may hold a $t$-design. In this paper, we first construct a class of linear codes from cyclic codes related to Dembowski-Ostrom functions. By using exponential sums, we then  determine the weight distribution of the linear codes. Finally, we obtain infinite families of $2$-designs from the supports of all codewords with a fixed weight in these codes. Furthermore, the parameters of $2$-designs are calculated explicitly.
\end{abstract}

\noindent\textbf{Keywords:}  $2$-designs, linear codes, cyclic codes from Dembowski-Ostrom functions, affine-invariant codes, exponential sums, weight distributions

\section{Introduction}
We start this article by introducing the relevant conception to $t$-designs. Let $v, k, \lambda$, and $t$ be positive integers such that $v>k\geq t.$ Let $\mathcal{P}$ be a set with cardinality $\mid\mathcal{P}\mid=v$ and $\mathcal{B}$ be a multi-set of  $k$-subsets of $\mathcal{P}$ with $\mid \mathcal{B} \mid =b$.  The elements of $\mathcal{P}$ are called as points and that of $\mathcal{B}$ as blocks. A $t$-$(v,k,\lambda)$ design is a pair $\mathbb{D}=(\mathcal{P}, \mathcal{B})$ such that each block contains exactly $k$ points, and every set of $t$ distinct points is contained in exactly $\lambda$ blocks. A $t$-$(v,k,\lambda)$ design without repeated blocks is called a {\em simple} $t$-design. A $t$-design is called {\em symmetric} if $v=b$ and {\em trivial} if $k=t$ or $k=v$. In the following, we pay only attention to simple $t$-designs with $t < k < v$. When $t\geq
2$,  we call the $t$-$(v, k, 1)$ design a {\em Steiner system}. 
Furthermore, the number of blocks in a $t$-design is
\begin{equation}\label{condition}
b=\frac{\lambda {v \choose t}}{ {k \choose t}}.
\end{equation}

The study of $t$-designs as a mathematical discipline has  been a very interesting subject for many years due to their wide applications in coding theory and analysis of statistical experiments. $t$-Designs have many other applications as well, such as cryptography, communications and other engineering areas.
It is well known that the interplay between linear codes and $t$-designs has been attracted a lot of attention for decades. A linear code over any {\em finite field} can be derived from the incidence matrix of a $t$-design and much progress has been made (see for example~\cite{AK92,DD15,Ton98,Ton07}). On the other hand, both linear and nonlinear codes can hold $t$-designs and some of $2$-designs and $3$-designs are derived from codes, we refer the readers to \cite{AK92,BJL99,CM06,Ding15b,KP95,KP03,MS771,RR10, Ton98,Ton07,TDX19} for examples. The main technical idea is based on the support design, that is, if one index the coordinates of a codeword in a code $\mathcal{C}$ by $(0, 1, \ldots, n-1)$ and let $\mathcal{P}=(0, 1, \ldots, n-1)$. And so the pair $(\mathcal{P},\mathcal{B}_i)$ may be a $t$-$(n,i,\lambda)$ design for some positive $\lambda$, where $\mathcal{B}_i$ is the set of the supports of all codewords with weight $i$ in $\mathcal{C}$ for each $i$ with
$A_i\neq 0$ \cite{DL17} (we will describe the definition  below).

There exist two classical approaches to obtain infinitely families of $t$-designs from linear codes. The first is to employ the Assmus-Mattson Theorem given in \cite{AM69, AM74} and we refer the readers to \cite{Ding182,DL17} for details. The second is to study the automorphism group of a linear code $\mathcal{C}$. That is, if the permutation part of the automorphism group acts $t$-transitively on a code $\mathcal{C}$, then the code $\mathcal{C}$ holds $t$-designs \cite{AK92,MS771}. Only a small amount of work in the direction has been done. For example, Ding {\em et al.} \cite{DTV19} and Du {\em et al.} \cite{DWTW190,DWTW191,DWF192} have derived infinite families of $3$-designs and $2$-designs from several different classes of affine-invariant codes, respectively. 
In this paper, we will obtain infinite families of $2$-designs by studying the automorphism group of a class of linear codes we construct.

The remainder of this paper is organized as follows. In Section \ref{code-construction}, we promote the definition of the linear codes we discuss, which are derived from cyclic codes related to a class of Dembowski-Ostrom functions.  In Section \ref{section-2}, we introduce some notation and preliminary results on affine invariant codes, $2$-designs, exponential sums and cyclotomic fields, which will be used in subsequent sections. In Section \ref{section-3}, we determine explicitly the weight distribution of the codes we defined by using certain exponential sums and obtain infinite families of $2$-designs and their parameters. Section \ref{section-4} proves the main results. Furthermore, we also 
use Magma programs to give some examples. And Section \ref{section-5} concludes the paper.

\section{Linear codes and our construction} \label{code-construction}

  Let $p$ be an odd prime and  $m$ any positive integer. Let $\mathbb{F}_q$ denote the finite field with $q=p^m$ elements and $\mathbb{F}^*_q=\mathbb{F}_q\backslash\{0\}$. An $[n,k,\delta]$  {\em linear code} $\mathcal{C}$ over $\mathbb{F}_p$
 is called  {\em cyclic code} if each codeword $(c_0,c_1,\ldots,c_{n-1})\in\mathcal{C}$ implies $(c_{n-1}, c_0, c_1, \ldots,
c_{n-2}) \in \mathcal{C}$. Any cyclic code $\mathcal{C}$ can be expressed as $\mathcal{C} = \langle g(x) \rangle$, where $g(x)$ is monic and has the least degree. The polynomial  $g(x)$ is called the  {\em generator polynomial} and $h(x)=(x^n-1)/g(x)$ is referred to as the {\em parity-check polynomial} of $\mathcal{C}$. The {\em dual} of a cyclic code $\mathcal{C}$, denoted by $\mathcal{C}^{\bot}$, is a cyclic code with length $n$, dimension $n-k$ and  generator polynomial $x^kh(x^{-1})/h(0)$.  The  {\em extended code} $\overline{\mathcal{C}}$ of $\mathcal{C}$  is defined by
$$
\overline{\mathcal{C}}=\{(c_0, c_1, \ldots, c_n) \in \mathbb{F}_p^{n+1}: (c_0, c_1, \ldots, c_{n-1}) \in \mathcal{C} ~\mathrm{with} ~\sum^n_{i=0}c_i=0\}.
$$

Let $A_i$ be the number of codewords with Hamming weight $i$ in a code $\mathcal{C}$. The {\em weight enumerator} of $\mathcal{C}$ is defined by $\sum_{i=0}^nA_iz^i,$
and the sequence $(1, A_1, \ldots, A_n)$ is called the {\em weight distribution} of the code $\mathcal{C}.$ The weight distribution is an important research object in coding
theory because it contains some crucial information of the code, for example,  the minimum weight, which
determines the error correction ability of the code. If the number of nonzero $A_i$'s with $1 \leq i \leq n$ is $w$, then we call $\mathcal{C}$ a {\em $w$-weight code}.

If  $\mathbf{c}=(c_0, c_1, \ldots, c_{n-1})$ is a codeword in a code $\mathcal{C},$ then  {\em support} of $\mathbf{c}$ is defined by
$$Suppt(\mathbf{c})=\{0\leq i \leq n-1: c_i\neq 0\}\subseteq \{0, 1, \ldots, n-1\}.$$

For any integer $0\leq j< n$, the {\em $p$-cyclotomic coset} of $j$ modulo $n$ is defined by
$$C_j=\{jp^i \pmod n: 0 \le i \le \ell_j-1\},$$
where $\ell_j$ is the smallest positive integer such that $j\equiv jp^{\ell_j}\pmod n.$ The smallest integer in $C_j$ is called the {\em coset leader} of $C_j$. For a  cyclic code $\mathcal{C}$,  its generator polynomial $g(x)$ can be written as  $g(x)=\prod_j\prod_{e\in C_j}(x-\alpha^e)$, where $j$ runs through some coset leaders of the $p$-cyclotomic cosets $C_j$ modulo $n.$  The set   $T=\bigcup_jC_j$ is referred to as the {\em defining set} of $\mathcal{C}$, which is the union of these $p$-cyclotomic cosets.


Below, let $\alpha$ be a primitive element of $\mathbb{F}_q$, $h_i(x)$ be the minimal polynomials of $\alpha^i$ over $\mathbb{F}_p$ and $Tr$ denote the trace function from $\mathbb{F}_q$ onto $\mathbb{F}_p$ given by
$$Tr(x)=x+x^p+\cdots+ x^{p^{m-1}}, \mathrm{~for~ any~} x\in \mathbb{F}_q.$$
Denote  $\mathbb{C}$ by the cyclic code with length $n=q-1$ and parity-check polynomial $h_1(x)h_{p^l+1}(x)h_{p^{3l}+1}(x)$ related to a class of Dembowski-Ostrom functions \cite{LLX09}, where $1\leq l\leq m-1$ and $l\notin \{\frac{m}{6}, \frac{m}{4}, \frac{m}{2}, \frac{3m}{4}, \frac{5m}{6}\}.$
  We introduce the linear code ${\overline{{\mathbb{C}}^{\bot}}}^{\bot}$ by
\begin{eqnarray}\label{code-1}
{\overline{{\mathbb{C}}^\bot}}^\bot:=\{ (Tr(ax^{p^{3l}+1}+bx^{p^l+1} &+& cx)_{x\in \mathbb{F}_q}+h ):\\
& & a, b , c \in \mathbb{F}_q, h\in\mathbb{F}_p \}. \nonumber
\end{eqnarray}
In \cite{LLX09},  Luo \emph{et al.} examined the weight distribution of cyclic code $\mathbb{C}$. In this paper, we will determine the weight distribution of the  linear code ${\overline{{\mathbb{C}}^\bot}}^\bot$ and then  obtain infinitely families $2$-designs from the  codewords with nonzero weight in ${\overline{{\mathbb{C}}^\bot}}^\bot$. If $m=3d$, ${\overline{{\mathbb{C}}^\bot}}^\bot$ is the code we discussed in \cite{DWTW190}, thus we suppose $m\neq3d$ in this paper.

\section{Preliminaries}\label{section-2}
In this section, we summarize some standard notation and basic facts on affine-invariant codes, $2$-designs, exponential sums and cyclotomic fields. 
\subsection{Some notation}
For convenience, we adopt the following notation unless otherwise stated in this paper.
\begin{itemize}
\item $m\geq3$ and $l$ are positive integers with $1\leq l\leq m-1$ and  $l\notin \{\frac{m}{6}, \frac{m}{4}, \frac{m}{2}, \frac{3m}{4}, \\\frac{5m}{6}\}$.
 \item  $d'=gcd(m, 2l)$ and  $d=gcd(m, l)$.
  \item  If $m$ is even, $s=m/2$ and  $\mu=(-1)^{s/d}$.
  \item $p$ is an odd prime and  $p^*=(-1)^{\frac{p-1}{2}}p$.
\item  $q=p^m$, $n=p^m-1$ and $\varepsilon=(-1)^\upsilon$, where $\upsilon=0,1$.
\item $\zeta_N=e^{2\pi \sqrt{-1}/N}$ is a primitive $N$-th root of unity for any  integer $N \ge 2$.
\item $\eta$ and $\eta'$ are the quadratic characters of $\mathbb{F}_q^*$ and $\mathbb{F}_p^*,$ respectively (we will give the definition below).
\end{itemize}
\subsection{Affine-invariant codes and $2$-designs}
We begin this subsection by introducing the affine-invariant code.
The set of coordinate permutations that map a code $\mathcal{C}$ to itself forms a group, which we call { \em the permutation automorphism group of $\mathcal{C}$} and denote by $PAut(\mathcal{C})$. We define the affine group $GA_1(\mathbb{F}_q)$ by the set of all permutations
$$\sigma_{a,b}: x\mapsto  ax+b $$
of $\mathbb{F}_q$, where $a \in \mathbb{F}_q^*$ and  $b \in \mathbb{F}_q.$
An {\em affine-invariant} code is an extended cyclic code $\overline {\mathcal{C}}$ over $\mathbb{F}_p$ such that $GA_1(\mathbb{F}_q)\subseteq PAut(\overline{\mathcal{C}})$~\cite{HP03}.

The $p$-adic expansion of each $s\in\mathcal{P}$ is given by
$$
s=\sum^{m-1}_{i=0}s_ip^i,~ ~0\leq s_i\leq p-1 ,~0\leq i \leq m-1.
$$
For any $r=\sum^{m-1}_{i=0}r_ip^i \in\mathcal{P}$, we say that $r\preceq s$ if $r_i \leq s_i$ for all $0\leq i\leq m-1$. Clearly,
$r\preceq s$ implies that $r \le s $.


For some applications, an important question is to decide whether a given linear codes $\overline{\mathcal{C}}$ and ${\overline{\mathcal{C}}}^\bot$ are affine-invariant or not. The answers are provided by the following lemma. More precisely, the first item given by Kasami et al. \cite{KLP67} points out that one can determine whether a given extended primitive cyclic code $\overline{\mathcal{C}}$ is affine-invariant by analysing the properties of its defining set and the second item proposed by Ding \cite{Ding18b} shows that one can get new affine-invariant code from the known ones.


\begin{lemma}\label{Kasami-Lin-Peterson}
Let $\overline{\mathcal{C}}$ be an extended cyclic code of length $p^m$ over $\mathbb{F}_p$ with defining set $\overline{T}$.
\begin{itemize}
\item[(1)] \cite{KLP67} $\overline{\mathcal{C}}$ is affine-invariant if and only if whenever $s \in   \overline{T}$  then $r \in \overline{T}$ for all $r \in \mathcal{P}$  with $r \preceq s$.

\item[(2)] \cite{Ding18b}  The dual of an affine-invariant code $\overline{\mathcal{C}}$ over $\mathbb{F}_p$ of length $n+1$ is also affine-invariant.
 \end{itemize}
\end{lemma}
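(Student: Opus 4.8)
I would prove the two items separately, treating them essentially as the classical results of Kasami--Lin--Peterson~\cite{KLP67} and Ding~\cite{Ding18b}. For item (1), the plan is to work with the defining-set (Mattson--Solomon) description of the extended primitive cyclic code: index the coordinates of $\overline{\mathcal{C}}$ by the elements of $\mathbb{F}_q$, with the overall-parity coordinate attached to $0\in\mathbb{F}_q$, so that a word $(c_x)_{x\in\mathbb{F}_q}$ lies in $\overline{\mathcal{C}}$ if and only if $\sum_{x\in\mathbb{F}_q}c_x x^{t}=0$ for every $t\in\overline{T}$, under the convention $0^{0}=1$. The multiplicative maps $x\mapsto ax$ with $a\in\mathbb{F}_q^{*}$ always preserve $\overline{\mathcal{C}}$, because $\sum_x c_{a^{-1}x}x^{t}=a^{t}\sum_y c_y y^{t}$; hence the whole content of the statement is invariance under the translations $x\mapsto x+b$, $b\in\mathbb{F}_q$, i.e.\ under the maps $\sigma_{1,b}$.

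For item (1) I would then, for a codeword $c\in\overline{\mathcal{C}}$ and $b\in\mathbb{F}_q$, expand binomially:
\[
\sum_{x\in\mathbb{F}_q}c_{x-b}\,x^{t}=\sum_{y\in\mathbb{F}_q}c_y\,(y+b)^{t}=\sum_{r=0}^{t}\binom{t}{r}b^{\,t-r}\Bigl(\sum_{y\in\mathbb{F}_q}c_y y^{r}\Bigr),\qquad t\in\overline{T}.
\]
Since $t\le q-1$, the functions $b\mapsto b^{j}$ for $0\le j\le q-1$ are linearly independent on $\mathbb{F}_q$ (a nonzero polynomial of degree $\le q-1$ cannot vanish on all of $\mathbb{F}_q$), so $\sigma_{1,b}(c)\in\overline{\mathcal{C}}$ for every $b$ if and only if $\binom{t}{r}\sum_y c_y y^{r}=0$ for all $0\le r\le t$. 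Now $\binom{t}{r}\not\equiv 0\pmod p$ exactly when $r\preceq t$, by Lucas' congruence applied to the $p$-adic digits; and $\sum_y c_y y^{r}=0$ for \emph{all} $c\in\overline{\mathcal{C}}$ exactly when $r\in\overline{T}$, because $\overline{\mathcal{C}}^{\bot}$ is spanned by the parity checks $(x^{s})_{x\in\mathbb{F}_q}$ with $s\in\overline{T}$ and those with $0\le s\le q-1$ are linearly independent, so $(x^{r})_{x\in\mathbb{F}_q}\in\overline{\mathcal{C}}^{\bot}$ iff $r\in\overline{T}$. Putting these together, $GA_1(\mathbb{F}_q)\subseteq PAut(\overline{\mathcal{C}})$ holds if and only if $r\in\overline{T}$ whenever $t\in\overline{T}$ and $r\preceq t$, which is the claim.

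For item (2), I would use two standard facts. First, any coordinate permutation preserves the Euclidean inner product, so it maps a code to itself if and only if it maps the dual to itself; hence $PAut(\overline{\mathcal{C}})=PAut(\overline{\mathcal{C}}^{\bot})$, and in particular $GA_1(\mathbb{F}_q)\subseteq PAut(\overline{\mathcal{C}}^{\bot})$. Second, any linear code over $\mathbb{F}_p$ of length $q=p^{m}$ whose permutation automorphism group contains $GA_1(\mathbb{F}_q)$ is automatically an extended primitive cyclic code: the stabiliser of the coordinate $0$ in $GA_1(\mathbb{F}_q)$ is the multiplicative group, which acts on the remaining $q-1$ coordinates (labelled by the powers of a primitive element) as a single cyclic shift, and one verifies the extension property at $0$. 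Combining the two facts, $\overline{\mathcal{C}}^{\bot}$ is an extended cyclic code invariant under $GA_1(\mathbb{F}_q)$, i.e.\ affine-invariant.

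The routine-but-delicate point is the bookkeeping behind item (1): pinning down the precise defining-set description of the extended code (the coordinate $0$, the convention $0^{0}=1$, and the passage between the length-$(q-1)$ cyclic code and its length-$q$ extension), together with the two linear-independence facts invoked above. Once these are set up, both items follow in a few lines; alternatively one may simply quote~\cite{KLP67} and~\cite{Ding18b}.
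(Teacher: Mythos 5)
The paper does not prove this lemma at all: both items are imported verbatim from \cite{KLP67} and \cite{Ding18b}, so there is no in-paper argument to compare against and the only question is whether your reconstruction is sound. Your item (1) is the standard Kasami--Lin--Peterson proof (multiplications are free, translations are handled by binomial expansion of the parity checks, linear independence of $b\mapsto b^{j}$ for $0\le j\le q-1$, and Lucas' congruence to decide when $\binom{t}{r}\not\equiv 0\pmod p$), and it is correct as written, up to the usual care that ``$(x^{r})\in\overline{\mathcal{C}}^{\bot}$'' must be read over $\mathbb{F}_q$ (Delsarte) and that the maximality of the defining set is what gives the ``only if'' direction.

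Item (2) has a genuine, though small, gap. The step ``any linear code of length $q$ with $GA_1(\mathbb{F}_q)\subseteq PAut$ is automatically an extended primitive cyclic code, and one verifies the extension property at $0$'' is false as a general statement: the full space $\mathbb{F}_p^{q}$ is $GA_1$-invariant (and is the dual of the zero code, which is an extended cyclic code), yet it violates the sum-zero condition in the paper's definition of $\overline{\mathcal{C}}$, so it is not an extended cyclic code. Thus the ``extension property at $0$'' is exactly where the content of item (2) sits and cannot be waved through. To close it, observe that $\overline{\mathcal{C}}^{\bot}$ lies in the hyperplane $\sum_{x}d_x=0$ if and only if the all-one word lies in $\overline{\mathcal{C}}$, and that $\mathbf{1}\in\overline{\mathcal{C}}$ because $\sum_{x\in\mathbb{F}_q}x^{t}=0$ for every $t\in\overline{T}\subseteq\{0,\dots,q-2\}$ (for $t\ge 1$ this is the vanishing of power sums over $\mathbb{F}_q$, and for $t=0$ it is $q\equiv 0\pmod p$); the same fact shows that puncturing $\overline{\mathcal{C}}^{\bot}$ at the coordinate $0$ loses no dimension, so $\overline{\mathcal{C}}^{\bot}$ really is the extension of a cyclic code. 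With that one computation added your argument is complete. The route taken in \cite{Ding18b} is different and avoids the issue: it computes the defining set of $\overline{\mathcal{C}}^{\bot}$ explicitly (essentially $\{s: (q-1)-s\notin\overline{T}\}$) and checks that the combinatorial condition of item (1) is preserved, using that $r\preceq s$ if and only if $(q-1)-s\preceq (q-1)-r$; that version transfers the whole burden to item (1) and sidesteps the extension-property verification.
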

%
%


\begin{theorem}\label{2-design}
  \cite{Ding18b} For each $i$ with $A_i \neq 0$ in an affine-invariant code $\overline{\mathcal{C}}$, the supports of the codewords of weight $i$ form a $2$-design.
\end{theorem}

Theorem \ref{2-design} is very attractive in the sense that they determine the existence of $2$-designs.  The following theorem, describe the relation of all codewords with the same support in a linear code $\mathcal{C}$,  will be used together with Eq.(\ref{condition}) to calculate the parameters of $2$-designs later.

\begin{theorem}\label{design parameter}~\cite{Ding18b} Let $\mathcal{C}$ be a linear code over $\mathbb{F}_p$ with length $n$ and minimum weight $\delta$. Let $w$ be the largest integer with $w\leq n$ satisfying
 $${w-\lfloor\frac{w+p-2}{p-1}\rfloor}< \delta.$$
Let $\mathbf{c_1}$ and $\mathbf{c_2}$ be two codewords of weight $i$ with $ \delta \leq i\leq w$ and $Suppt(\mathbf{c_1})=Suppt(\mathbf{c_2}).$ Then $\mathbf{c_1}=a\mathbf{c_2}$ for some $a\in \mathbb{F}_p^*$.
\end{theorem}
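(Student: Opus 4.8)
The plan is a pigeonhole argument on the coordinatewise ratios of the two codewords, closed off by the minimum-distance hypothesis. Set $S = Suppt(\mathbf{c_1}) = Suppt(\mathbf{c_2})$, so that $|S| = i$ and both words vanish on every coordinate outside $S$. For each $j \in S$ the ratio $r_j := c_{1,j}\,c_{2,j}^{-1}$ is a well-defined element of $\mathbb{F}_p^*$, a set of size $p-1$. Hence by the pigeonhole principle some value $a \in \mathbb{F}_p^*$ occurs among the $r_j$ at least $\lceil i/(p-1)\rceil$ times.

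First I would fix such an $a$ and look at the codeword $\mathbf{c} := \mathbf{c_1} - a\,\mathbf{c_2} \in \mathcal{C}$. It is zero outside $S$, and also zero at every $j \in S$ with $r_j = a$; therefore
$$
\mathrm{wt}(\mathbf{c}) \;\le\; i - \left\lceil \frac{i}{p-1}\right\rceil \;=\; i - \left\lfloor \frac{i+p-2}{p-1}\right\rfloor ,
$$
the last equality being the elementary identity $\lceil i/(p-1)\rceil = \lfloor (i+p-2)/(p-1)\rfloor$.

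Next I would check that $f(i) := i - \lfloor (i+p-2)/(p-1)\rfloor$ is non-decreasing: one has $f(i+1)-f(i) = 1 - \bigl(\lfloor (i+p-1)/(p-1)\rfloor - \lfloor (i+p-2)/(p-1)\rfloor\bigr)$, and the difference of two consecutive floors is $0$ or $1$. (This also shows $f(\delta) < \delta$, so that the range $\delta \le i \le w$ is nonempty.) Using $i \le w$ and the defining property of $w$ we obtain $\mathrm{wt}(\mathbf{c}) \le f(i) \le f(w) = w - \lfloor (w+p-2)/(p-1)\rfloor < \delta$. Since $\delta$ is the minimum weight of $\mathcal{C}$, a codeword of weight strictly below $\delta$ must be the zero word; hence $\mathbf{c} = \mathbf{0}$, i.e. $\mathbf{c_1} = a\,\mathbf{c_2}$ with $a \in \mathbb{F}_p^*$.

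There is no real obstacle here: the whole argument is the pigeonhole observation above, and the only points needing care are the ceiling–floor identity and the monotonicity of $f$, both entirely routine. It is worth stressing that this statement concerns arbitrary linear codes over $\mathbb{F}_p$ — neither the Dembowski–Ostrom construction nor the hypotheses on $l$ play any role — and that it will be used downstream, together with Eq.~(\ref{condition}), to pin down the parameters $(v,k,\lambda)$ of the $2$-designs once the weight distribution of ${\overline{{\mathbb{C}}^\bot}}^\bot$ has been computed.
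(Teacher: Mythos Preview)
Your argument is correct: the pigeonhole bound on the coordinatewise ratios, together with the monotonicity of $f$ and the minimum-distance hypothesis, forces $\mathbf{c_1}-a\,\mathbf{c_2}=\mathbf{0}$ exactly as you claim. Note, however, that the paper does not give its own proof of this statement at all --- it is quoted verbatim from \cite{Ding18b} and invoked as a black box in the proof of Theorem~\ref{parameter-1}. Your write-up therefore supplies a self-contained argument where the paper offers only a citation; the pigeonhole/ratio idea you use is in fact the standard proof behind this result.
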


\subsection{Exponential sums} Generally,  the weight of each codeword in a cyclic code $\mathcal{C}$  can be expressed by certain exponential sums so that the weight distribution of $\mathcal{C}$ can be determined when the exponential sums could be computed explicitly (see~\cite{FY05,TXF16,Vlugt95,X12} and the references therein).

An additive character of $\mathbb{F}_q$ is a nonzero function $\chi$ from $\mathbb{F}_q$ to the set of complex numbers of absolute value $1$ such that $\chi(x+y)=\chi(x)\chi(y)$ for any pair $(x,y) \in \mathbb{F}_q^2$. For each $u \in \mathbb{F}_q$, the function
$$\chi_u(v)=\zeta_p^{Tr(uv)},~v \in \mathbb{F}_q,$$
denotes an additive character of $\mathbb{F}_q$. 
We call $\chi_1$ the {\em canonical} additive character of $\mathbb{F}_q$ and we have  $\chi_u(x)=\chi_1(ux)$ for all $u\in\mathbb{F}_q$ ~\cite{LN97}.

For each $j=0,1,\ldots,q-2,$ a multiplicative character  of $\mathbb{F}_q$  is defined by the function $\psi_j(\alpha^k)=\zeta_{q-1}^{jk},$ for $k=0,1,\ldots,q-2.$  We denote by $\eta:=\psi_{\frac{q-1}{2}}$ which  is called the quadratic character of $\mathbb{F}_q.$ Similarly,  we may define the quadratic character  $\eta ^{'}$  of $\mathbb{F}_p.$  We extend these quadratic characters by
    letting $\eta(0)=0$ and $\eta'(0)=0$.

The {\em Gauss sum} $G(\eta', \chi'_1)$ over $\mathbb{F}_p$ is defined by $G(\eta', \chi'_1)=\sum\limits_{v \in \mathbb{F}_p^*}\eta'(v)\chi'_1(v)=\sum\limits_{v \in \mathbb{F}_p}\eta'(v)\chi'_1(v)$, where $\chi'_1$ is the canonical additive character of $\mathbb{F}_p$. The following Lemma \ref{Gauss} related to Gauss sum is essential to determine the value of Eq.(\ref{S(a.b.c)}).
\begin{lemma}\label{Gauss}~\cite{LN97} With the notation above, we have
$$G(\eta', \chi'_1)={\sqrt{(-1)}^{(\frac{p-1}{2})^2}}\sqrt{p}=\sqrt{p^*}.$$
\end{lemma}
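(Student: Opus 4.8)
This is the classical Gauss evaluation of the quadratic Gauss sum over $\mathbb{F}_p$ (the theorem on its sign), quoted here from \cite{LN97}; a proof splits into an elementary part that pins $G(\eta',\chi_1')$ down to a sign and the genuinely delicate part that fixes the sign. For the first part: since $\eta'(0)=0$ the two sums in the definition of $G(\eta',\chi_1')$ coincide, and since $\sum_{v\in\mathbb{F}_p}\chi_1'(v)=0$ one also has $G(\eta',\chi_1')=\sum_{t\in\mathbb{F}_p}\zeta_p^{t^2}$ (a nonzero $v$ has $1+\eta'(v)$ square roots in $\mathbb{F}_p$, and $0$ has one). Conjugating, using that $\eta'$ is real-valued and $\overline{\chi_1'(v)}=\chi_1'(-v)$, and substituting $v\mapsto -v$ gives $\overline{G(\eta',\chi_1')}=\eta'(-1)\,G(\eta',\chi_1')$; on the other hand the standard character-sum computation for the nontrivial character $\eta'$ gives $G(\eta',\chi_1')\,\overline{G(\eta',\chi_1')}=p$. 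Multiplying these yields $\eta'(-1)\,G(\eta',\chi_1')^2=p$, hence $G(\eta',\chi_1')^2=\eta'(-1)\,p=(-1)^{(p-1)/2}p=p^*$, using Euler's criterion $\eta'(-1)=(-1)^{(p-1)/2}$. Therefore $G(\eta',\chi_1')=\pm\sqrt{p^*}$, and it remains to show the sign is $+$: that $\sum_{t\in\mathbb{F}_p}\zeta_p^{t^2}$ equals $+\sqrt p$ when $p\equiv1\pmod4$ and $+i\sqrt p$ when $p\equiv3\pmod4$, i.e. $\sqrt{(-1)}^{((p-1)/2)^2}\sqrt p$.

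For the sign I would run Schur's eigenvalue argument. Form the $p\times p$ Fourier matrix $F=\big(\zeta_p^{jk}\big)_{0\le j,k\le p-1}$, whose trace is exactly $G(\eta',\chi_1')$. A direct check gives $F^2=pP$, where $P$ is the permutation matrix of the involution $k\mapsto -k\bmod p$; since $p$ is odd, $P$ has eigenvalue $+1$ with multiplicity $(p+1)/2$ and $-1$ with multiplicity $(p-1)/2$. Hence $F^4=p^2I$, so the eigenvalues of $F$ lie in $\{\sqrt p,\,i\sqrt p,\,-\sqrt p,\,-i\sqrt p\}$, and writing $a,b,c,d$ for their multiplicities the relation $F^2=pP$ forces $a+c=(p+1)/2$, $b+d=(p-1)/2$, $a+b+c+d=p$. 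Evaluating $\det F$ two ways — directly as the Vandermonde product $\prod_{0\le j<k\le p-1}(\zeta_p^k-\zeta_p^j)$, which on writing $\zeta_p^k-\zeta_p^j=2i\sin\!\big(\pi(k-j)/p\big)e^{\pi i(j+k)/p}$ becomes a positive product of sines times a computable root of unity, and as $(\sqrt p)^{a}(i\sqrt p)^{b}(-\sqrt p)^{c}(-i\sqrt p)^{d}$ — together with a further, more delicate, computation (for instance of the trace of $F$ on the $(p-1)/2$-dimensional antisymmetric subspace) pins down $a-c$ and $b-d$ and yields $G(\eta',\chi_1')=\sqrt{(-1)}^{((p-1)/2)^2}\sqrt p=\sqrt{p^*}$. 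A purely analytic alternative is Dirichlet's evaluation of $\sum_{t=0}^{n-1}e^{2\pi it^2/n}$ by Poisson summation, specialized to $n=p$.

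The main obstacle is precisely this sign determination. The identities $G(\eta',\chi_1')^2=p^*$ and $|G(\eta',\chi_1')|^2=p$ are routine character-sum manipulations, but selecting the correct square root is exactly Gauss's sign theorem, and every known proof needs a genuinely nonlocal ingredient — an exact determinant or product-of-sines evaluation, Poisson summation, or a residue computation — to break the sign ambiguity. Since only the statement of the lemma is used in the sequel, invoking the classical result as recorded in \cite{LN97} suffices here.
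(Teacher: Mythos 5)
The paper does not prove this lemma at all: it is quoted verbatim from \cite{LN97}, so there is no ``paper's proof'' to compare against, and your closing remark that citing the classical result suffices is exactly what the authors do. Your sketch of the actual proof is correct in its structure and in all the details you make explicit: the identification $G(\eta',\chi_1')=\sum_t\zeta_p^{t^2}$, the relation $\overline{G}=\eta'(-1)G$ together with $|G|^2=p$ giving $G^2=p^*$, and the reduction of the whole problem to Gauss's sign theorem are all standard and accurately rendered, as is the observation that $\sqrt{-1}^{((p-1)/2)^2}$ is $1$ or $i$ according as $p\equiv 1$ or $3\pmod 4$. The Schur argument is also set up correctly ($F^2=pP$, the eigenvalue multiplicities of $P$, hence $a+c=(p+1)/2$ and $b+d=(p-1)/2$); the one place you wave your hands, the ``further, more delicate, computation'' pinning down $a-c$ and $b-d$, can be closed cleanly by noting that $|\mathrm{tr}\,F|^2=|G|^2=p$ forces $(a-c)^2+(b-d)^2=1$, so exactly one of $a-c$, $b-d$ is $\pm 1$ and the other is $0$, after which the Vandermonde evaluation of $\det F$ (which fixes $b+2c+3d \bmod 4$) resolves the remaining sign. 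So your write-up is a correct, self-contained outline of a result the paper merely imports; what it buys over the paper's bare citation is a record of where the genuine difficulty lies (the sign), which the paper's presentation hides entirely.
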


To determine the weight distribution of the code ${\overline{{\mathbb{C}}^{\bot}}}^{\bot}$, 
we introduce the following function
\begin{equation}\label{S(a.b.c)}
S(a,b,c)=\sum\limits_{x \in \mathbb{F}_q}\zeta_p^{Tr(ax^{p^{3l}+1}+bx^{p^l+1}+cx)},\qquad a, b, c \in \mathbb{F}_q.
\end{equation}

For the value distribution of $S(a,b,c)$, we have the following results given by Luo {\em et al.}.
\begin{lemma}\label{value distribution}~\cite{LLX09}
For $m \geq 3$ and $j\in \mathbb{F}_p^*$, the value distribution of $\{S(a,b,c) : a, b, c \in \mathbb{F}_q\}$ defined in Eq.(\ref{S(a.b.c)}) is given in Table \ref{1} if $d'=d$ is odd,  in  Table \ref{2} if $d'=d$ is even and  in  Table \ref{3} if $d'=2d$, respectively (see Tables~\ref{1}, ~\ref{2} and~\ref{3} in Appendix I).
\end{lemma}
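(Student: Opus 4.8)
Since Lemma~\ref{value distribution} is quoted from \cite{LLX09}, I only outline the route one takes. The structural point is that $x\mapsto ax^{p^{3l}+1}+bx^{p^{l}+1}$ is a Dembowski--Ostrom polynomial, so $Q_{a,b}(x):=Tr\bigl(ax^{p^{3l}+1}+bx^{p^{l}+1}\bigr)$ is a quadratic form on $\mathbb{F}_q$ regarded as an $m$-dimensional $\mathbb{F}_p$-space. I would first record its associated symmetric bilinear form $B_{a,b}(x,y)=Q_{a,b}(x+y)-Q_{a,b}(x)-Q_{a,b}(y)$ and, moving all Frobenius powers onto one argument through $Tr(x^{p^{k}}y)=Tr(xy^{p^{m-k}})$, write it as $B_{a,b}(x,y)=Tr\bigl(y\,L_{a,b}(x)\bigr)$ with the (self-adjoint) $p$-linearised polynomial
$$
L_{a,b}(x)=ax^{p^{3l}}+a^{p^{m-3l}}x^{p^{m-3l}}+bx^{p^{l}}+b^{p^{m-l}}x^{p^{m-l}}.
$$
The radical of $B_{a,b}$ is exactly the zero set of $L_{a,b}$ in $\mathbb{F}_q$; since $p$ is odd it also lies in the radical of $Q_{a,b}$, so if $e_{a,b}$ denotes its $\mathbb{F}_p$-dimension then $Q_{a,b}$ has rank $r_{a,b}=m-e_{a,b}$.

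The next step is to reduce $S(a,b,c)$ to invariants of $Q_{a,b}$ by completing the square. This gives a dichotomy: if $c\notin\mathrm{Im}(L_{a,b})$ --- equivalently $x\mapsto Tr(cx)$ does not vanish on the radical of $B_{a,b}$ --- then $S(a,b,c)=0$; otherwise, picking any $x_{0}$ with $L_{a,b}(x_{0})=c$, one gets $S(a,b,c)=\zeta_p^{-Q_{a,b}(x_{0})}\sum_{x\in\mathbb{F}_q}\zeta_p^{Q_{a,b}(x)}$, a value independent of the choice of $x_{0}$ because $Q_{a,b}$ kills the radical. The remaining quadratic Gauss sum is evaluated by the usual diagonalisation of quadratic forms over $\mathbb{F}_p$: it equals $p^{\,m-r_{a,b}}\,\eta'(\Delta_{a,b})\,\bigl(\sqrt{p^{*}}\bigr)^{r_{a,b}}$, where $\Delta_{a,b}$ is the discriminant of the non-degenerate part of $Q_{a,b}$ and Lemma~\ref{Gauss} gives $\sqrt{p^{*}}=G(\eta',\chi'_1)$. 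Thus the whole value distribution is controlled by three combinatorial inputs: (i) the distribution of the rank $r_{a,b}$ as $(a,b)$ ranges over $\mathbb{F}_q^{2}$; (ii) the distribution of the symbol $\eta'(\Delta_{a,b})$; and (iii) for each admissible $(a,b)$, the distribution of the phase $\zeta_p^{-Q_{a,b}(x_{0})}$ as $c$ runs over the $p^{\,r_{a,b}}$ elements of $\mathrm{Im}(L_{a,b})$ --- it is this last input that produces the entries indexed by $j\in\mathbb{F}_p^{*}$ in Tables~\ref{1}, \ref{2} and \ref{3}.

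The technical heart is (i): one must decide for which $(a,b)$ the linearised polynomial $L_{a,b}$ has a nonzero kernel and how large it is. I would do this by manipulating $L_{a,b}(x)=0$ --- raising to suitable $p$-th powers and clearing monomials --- into a standard $p$-polynomial of controlled degree whose number of roots in $\mathbb{F}_q$ is governed by divisibility relations among $m,\,l,\,2l,\,3l$. This is exactly where $d=\gcd(m,l)$ and $d'=\gcd(m,2l)$ enter, and where the hypotheses $l\notin\{m/6,m/4,m/2,3m/4,5m/6\}$ and $m\neq 3d$ are used: they rule out the extra rank collapses that would otherwise occur, leaving $m-r_{a,b}$ confined to a short list (essentially $0,\,d,\,d',\,2d$). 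The three tables correspond to the arithmetic cases $d'=d$ odd, $d'=d$ even, and $d'=2d$, in which $m-r_{a,b}$ has differing parities, so that $\bigl(\sqrt{p^{*}}\bigr)^{r_{a,b}}$ is a rational integer or a $\sqrt{p}$-multiple accordingly.

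For (ii) one evaluates a norm-type expression in $a,b$ over the subfield $\mathbb{F}_{p^{d}}$ (or $\mathbb{F}_{p^{d'}}$) and reads off whether it is a square, using the Hasse--Davenport relation \cite{LN97} to pass between Gauss sums over $\mathbb{F}_q$ and over the subfield; (iii) is then a routine orbit count once (i) and (ii) are settled, and the normalising factor ($\sqrt{p}$ versus a power of $p$) decides which table applies. I expect the rank enumeration in (i), together with pinning down the signs in (ii), to be the main obstacle. An alternative that sidesteps some of the linearised-polynomial bookkeeping is to compute the low-order power moments $\sum_{a,b,c}S(a,b,c)^{k}$ (for a few small $k$) by interchanging the order of summation and solving the resulting linear system for the multiplicities; but fixing the correct signs still requires essentially the discriminant analysis above, so the two approaches meet at the same difficulty.
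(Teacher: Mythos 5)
Your outline is correct and follows essentially the same route as the source: the paper itself gives no proof of Lemma~\ref{value distribution} (it is imported verbatim from \cite{LLX09}), and that reference establishes the tables exactly by the quadratic-form machinery you describe --- the self-adjoint linearised polynomial $L_{a,b}$, the vanishing/completing-the-square dichotomy for $c$ relative to $\mathrm{Im}(L_{a,b})$, the rank distribution governed by $d=\gcd(m,l)$ and $d'=\gcd(m,2l)$, and moment identities to pin down multiplicities and signs. The only imprecision is your list of possible co-ranks: in the case $d'=2d$ the values in Table~\ref{3} force $m-r_{a,b}\in\{0,2d,4d,6d\}$ rather than $\{0,d,d',2d\}$, but this does not affect the validity of the strategy.
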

It is clear that $d'=d$ is odd implies $m$ is odd.

\subsection{Cyclotomic fields}
We state the following basic facts on Galois group of cyclotomic fields $\mathbb{Q}(\zeta_p)$ since  $S(a,b,c)$ is element in $\mathbb{Q}(\zeta_p)$.

\begin{lemma}\label{Cyclotomic fields}~\cite{IR90}
  Let $\mathbb{Z}$ be the rational integer ring and $\mathbb{Q}$ be the rational field.

(1) The ring of integers in $K=\mathbb{Q}(\zeta_p)$ is $\mathcal{O}_k=\mathbb{Z}[\zeta_p]$ and $\{\zeta^i_{p} : 1 \leq i \leq p-1\}$ is an integral basis of $\mathcal{O}_k.$

(2) The filed extension $K/\mathbb{Q}$ is Galois of degree $p-1$ and the Galois group $Gal(K/\mathbb{Q})=\{\sigma_y : y \in (\mathbb{Z}/p\mathbb{Z})^*\},$ where the automorphism $\sigma_y$ of $K$ is defined by $\sigma_y(\zeta_p)=\zeta^y_{p}.$

(3) $K$ has a unique quadratic subfield $L=\mathbb{Q}(\sqrt{p^*}).$ For $1 \leq y \leq p-1, \sigma_y(\sqrt{p^*})
=\eta'(y)\sqrt{p^*}.$ Therefore, the Galois group $Gal(L/\mathbb{Q})$ is $\{1,\sigma_\gamma\},$ where $\gamma$ is any quadratic nonresidue in $\mathbb{F}_p.$
\end{lemma}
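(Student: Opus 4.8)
This is a compendium of classical facts about the $p$-th cyclotomic field, so the plan is simply to record the standard arguments for (1)--(3) in order, quoting two well-known computations as black boxes. For (1), I would begin from the fact that $\zeta_p$ is a root of $\Phi_p(x) = (x^p-1)/(x-1) = 1 + x + \cdots + x^{p-1}$, which is irreducible over $\mathbb{Q}$ (Eisenstein at $p$ applied to $\Phi_p(x+1)$); hence $[K:\mathbb{Q}] = p-1$ and, since $1 + \zeta_p + \cdots + \zeta_p^{p-1} = 0$, the set $\{\zeta_p^i : 1 \le i \le p-1\}$ is a $\mathbb{Q}$-basis of $K$. To promote it to an \emph{integral} basis I would compute $\mathrm{disc}(\mathbb{Z}[\zeta_p]) = (-1)^{(p-1)/2}p^{p-2}$, so that $[\mathcal{O}_K : \mathbb{Z}[\zeta_p]]^2 \mid p^{p-2}$ and the index is a power of $p$; the factorization $p = \Phi_p(1) = \prod_{j=1}^{p-1}(1-\zeta_p^j) = u\,(1-\zeta_p)^{p-1}$ with $u$ a unit of $\mathbb{Z}[\zeta_p]$ then shows $p$ is totally ramified with $(1-\zeta_p)$ the unique prime above it, and a local analysis at this prime (equivalently Dedekind's criterion, using $\Phi_p(x) \equiv (x-1)^{p-1} \bmod p$) forces $p \nmid [\mathcal{O}_K : \mathbb{Z}[\zeta_p]]$, whence $\mathcal{O}_K = \mathbb{Z}[\zeta_p]$.

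For (2), I would observe that $K$ is the splitting field of $x^p - 1$ over $\mathbb{Q}$, since its roots $1, \zeta_p, \ldots, \zeta_p^{p-1}$ are all powers of $\zeta_p$; hence $K/\mathbb{Q}$ is Galois. Any $\sigma \in \mathrm{Gal}(K/\mathbb{Q})$ is determined by $\sigma(\zeta_p)$, which must again be a primitive $p$-th root of unity, i.e.\ $\zeta_p^y$ for a unique $y \in (\mathbb{Z}/p\mathbb{Z})^*$; conversely, irreducibility of $\Phi_p$ guarantees every such assignment extends to an automorphism $\sigma_y$. The map $\sigma_y \mapsto y$ is a group isomorphism $\mathrm{Gal}(K/\mathbb{Q}) \cong (\mathbb{Z}/p\mathbb{Z})^*$, which is cyclic of order $p-1$.

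For (3), since $\mathrm{Gal}(K/\mathbb{Q}) \cong (\mathbb{Z}/p\mathbb{Z})^*$ is cyclic of even order $p-1$, it has a unique subgroup of index $2$ (the squares), so by the Galois correspondence $K$ has a unique quadratic subfield $L$. To identify $L$ I would introduce the quadratic Gauss sum $g = G(\eta', \chi'_1) = \sum_{a \in \mathbb{F}_p^*}\eta'(a)\zeta_p^a$ and use the classical evaluation $g^2 = p^*$ (this is Lemma \ref{Gauss}), so that $\sqrt{p^*} = g \in K$ and $L = \mathbb{Q}(\sqrt{p^*})$. For the action, the substitution $b = ay$ gives, for $y \in (\mathbb{Z}/p\mathbb{Z})^*$,
$$\sigma_y(g) = \sum_{a \in \mathbb{F}_p^*}\eta'(a)\zeta_p^{ay} = \sum_{b \in \mathbb{F}_p^*}\eta'(by^{-1})\zeta_p^{b} = \eta'(y^{-1})\,g = \eta'(y)\,g,$$
using that $\eta'$ is real-valued; hence $\sigma_y(\sqrt{p^*}) = \eta'(y)\sqrt{p^*}$, so $\sigma_y$ fixes $L$ precisely when $y$ is a quadratic residue, and $\mathrm{Gal}(L/\mathbb{Q}) = \{1, \sigma_\gamma\}$ for any quadratic nonresidue $\gamma$ in $\mathbb{F}_p$.

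The whole argument is bookkeeping in Galois theory apart from two genuine inputs, which I would quote rather than reprove: the discriminant of $\mathbb{Z}[\zeta_p]$ together with the total-ramification step in (1), and the Gauss sum evaluation $g^2 = p^*$ in (3); both are in standard references such as \cite{IR90} and \cite{LN97}. If a self-contained treatment were required, the main obstacle would be the index computation in (1) --- showing $[\mathcal{O}_K : \mathbb{Z}[\zeta_p]]$ is prime to $p$ --- since that is the one place where a little ramification theory (or Dedekind's criterion) at the totally ramified prime $(1-\zeta_p)$ is genuinely needed.
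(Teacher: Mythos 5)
The paper offers no proof of this lemma, simply citing Ireland--Rosen \cite{IR90}, and your write-up is the standard textbook argument for exactly these facts (Eisenstein/discriminant/ramification for $\mathcal{O}_K=\mathbb{Z}[\zeta_p]$, the cyclotomic Galois group, and the Gauss-sum identification of the quadratic subfield together with $\sigma_y(g)=\eta'(y)g$), all of which checks out. So the proposal is correct and takes essentially the same (i.e., the cited reference's) approach; nothing further to compare.
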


\section{
Main results}\label{section-3}
In the section, we only present the main results on the weight distribution of the code ${\overline{{\mathbb{C}}^{\bot}}}^{\bot}$  and the corresponding $2$-designs.
The proofs will be presented in subsequent section.


\begin{theorem}\label{weight1}
For $m\geq 6$, the weight distribution of the code ${\overline{{\mathbb{C}}^{\bot}}}^{\bot}$ over $\mathbb{F}_p$ with length $n+1$ and dimension $dim({\overline{{\mathbb{C}}^{\bot}}}^{\bot})=3m+1$ is given in Table \ref{4} when $d'=d$ is odd, in  Table \ref{5} when $d'=d$ is even and in  Table \ref{6} when $d'=2d$, respectively.
\begin{table}
\begin{center}
\caption{The weight distribution of ${\overline{{\mathbb{C}}^{\bot}}}^{\bot}$ when $d'=d$ is odd}\label{4}
\begin{tabular}{ll}
\hline\noalign{\smallskip}
Weight  &  Multiplicity   \\
\noalign{\smallskip}
\hline\noalign{\smallskip}
$0$  &  $1$ \\
$p^{m-1}(p-1)$  &  $ p(p^{2m+2d-1}+p^{2m+d}-p^{2m+d-1}$\\ &$-p^{2m}-p^{2m-1}+p^{2m-2d}-p^{2m-3d}$\\ &$+p^{2m-3d-1}+p^{m+2d-1}-p^m+$\\
&$p^{m-2d}-p^{m-2d-1}+p^{2d}-1)$\\ &$(p^m-1)/(p^{2d}-1) $    \\
$p^{\frac{m-1}{2}}(p^{\frac{m+1}{2}}-p^{\frac{m-1}{2}}+\varepsilon(-1)^\frac{p-1}{2}) $ & $ \frac{p^{m+2d}(p-1)(p^m-p^{m-d}-p^{m-2d}+1)(p^m-1)}{2(p^{2d}-1)}  $     \\
$ p^{\frac{m+d-2}{2}}(p-1)(p^{\frac{m-d}{2}}+\varepsilon)$  &  $ \frac{1}{2}p^{\frac{3m-3d}{2}}(p^{\frac{m-d}{2}}-\varepsilon)(p^m-1)$     \\
$ p^{\frac{m+d-2}{2}}(p^{\frac{m-d+2}{2}}-p^{\frac{m-d}{2}}+\varepsilon)$  &  $\frac{1}{2}p^{\frac{3m-3d}{2}}(p-1)(p^{\frac{m-d}{2}}+\varepsilon)(p^m-1)$     \\
$p^{\frac{m+2d-1}{2}}(p^{\frac{m-2d+1}{2}}-p^{\frac{m-2d-1}{2}}+\varepsilon(-1)^\frac{p-1}{2})$  &  $ \frac{p^{m-2d}(p-1)(p^{m-d}-1)(p^m-1)}{2(p^{2d}-1)}$     \\
$ p^m$  &  $ p-1$     \\
\noalign{\smallskip}
\hline
\end{tabular}
\end{center}
\end{table}

\begin{table}
\begin{center}
\caption{The weight distribution of ${\overline{{\mathbb{C}}^{\bot}}}^{\bot}$ when $d'=d$ is even}\label{5}
\begin{tabular}{ll}
\hline\noalign{\smallskip}
Weight  &  Multiplicity   \\
\noalign{\smallskip}
\hline\noalign{\smallskip}
$0$  &  $1$ \\
$p^{s-1}(p-1)(p^s+\varepsilon)$  &  $ \frac{p^{m+2d}(p^m-p^{m-d}-p^{m-2d}+1)(p^m-1)}{2(p^{2d}-1)}$     \\
$  p^{s-1}(p^{s+1}-p^s+\varepsilon)$  &  $ \frac{p^{m+2d}(p-1)(p^m-p^{m-d}-p^{m-2d}+1)(p^m-1)}{2(p^{2d}-1)}$    \\
$p^{\frac{m+d-2}{2}}(p-1)(p^{\frac{m-d}{2}}+\varepsilon)$  &  $\frac{1}{2}p^{\frac{3m-3d}{2}}(p^{\frac{m-d}{2}}-\varepsilon)(p^m-1)$     \\
$p^{\frac{m+d-2}{2}}(p^{\frac{m-d+2}{2}}-p^{\frac{m-d}{2}}+\varepsilon)$  &  $\frac{1}{2}p^{\frac{3m-3d}{2}}(p-1)(p^{\frac{m-d}{2}}+\varepsilon)(p^m-1)$     \\
$ p^{\frac{m+2d-2}{2}}(p-1)(p^{\frac{m-2d}{2}}+\varepsilon)$  &  $\frac{1}{2}p^{m-2d}(p^{m-d}-1)(p^m-1)/(p^{2d}-1)$     \\
$p^{\frac{m+2d-2}{2}}(p^{\frac{m-2d+2}{2}}-p^{\frac{m-2d}{2}}+\varepsilon)$  &  $\frac{p^{m-2d}(p-1)(p^{m-d}-1)(p^m-1)}{2(p^{2d}-1)}$     \\
$p^{m-1}(p-1)$  &  $ p(p^{2m-d}-p^{2m-2d}+p^{2m-3d}-p^{m-2d}$\\ &$+1)(p^m-1) $    \\
$ p^m$  &  $ p-1$     \\
\noalign{\smallskip}
\hline
\end{tabular}
\end{center}
\end{table}

\begin{table}
\begin{center}
\caption{The weight distribution of ${\overline{{\mathbb{C}}^{\bot}}}^{\bot}$ when $d'=2d$}\label{6}
\begin{tabular}{ll}
\hline\noalign{\smallskip}
Weight  &  Multiplicity   \\
\noalign{\smallskip}
\hline\noalign{\smallskip}
$0$  &  $1$ \\
$p^{s-1}(p-1)(p^s-\mu) $ & $p^m(p^{m+6d}-p^{m+4d}-p^{m+d}+\mu p^{s+5d}-$\\&$\mu p^{s+4d}+p^{6d})(p^m-1)/(p^d+1)(p^{2d}-1)$\\&$(p^{3d}+1)  $     \\
$ p^{s-1}(p^{s+1}-p^s+\mu)$  &  $p^m(p-1)(p^{m+6d}-p^{m+4d}-p^{m+d}+\mu p^{s+5d}-$\\&$\mu p^{s+4d}+p^{6d})(p^m-1)/(p^d+1)(p^{2d}-1)$\\&$(p^{3d}+1)  $     \\
$p^{s+d-1}(p-1)(p^{s-d}+\mu)$  &  $ p^{m-2d}(p^{m+3d}+p^{m+2d}-p^m-p^{m-d}-p^{m-2d}-$\\&$\mu p^{s+3d}+\mu p^s+p^{3d})(p^m-1)/(p^d+1)^2(p^{2d}-1)$     \\
$  p^{s+d-1}(p^{s-d+1}-p^{s-d}-\mu)$  &  $ p^{m-2d}(p-1)(p^{m+3d}+p^{m+2d}-p^m-p^{m-d}-$\\&$p^{m-2d}-\mu p^{s+3d}+\mu p^s+p^{3d})(p^m-1)/(p^d+1)^2$\\&$(p^{2d}-1)$    \\
$p^{s+2d-1}(p-1)(p^{s-2d}-\mu)$  &  $p^{m-4d}(p^{s-d}+\mu)(p^{s+d}+p^s-p^{s-2d}-\mu p^d)$\\&$(p^m-1)/(p^d+1)^2(p^{2d}-1)$     \\
$p^{s+2d-1}(p^{s-2d+1}-p^{s-2d}+\mu)$  &  $p^{m-4d}(p-1)(p^{s-d}+\mu)(p^{s+d}+p^s-p^{s-2d}-$\\&$\mu p^d)(p^m-1)/(p^d+1)^2(p^{2d}-1)$     \\
$p^{s+3d-1}(p-1)(p^{s-3d}+\mu)$  &  $ p^{m-6d}(p^{s-2d}-\mu)(p^{s-d}+\mu)(p^m-1)/(p^d+1)$\\&$(p^{2d}-1)(p^{3d}+1)$     \\
$p^{s+3d-1}(p^{s-3d+1}-p^{s-3d}-\mu)$  &  $ p^{m-6d}(p-1)(p^{s-2d}-\mu)(p^{s-d}+\mu)(p^m-1)/$\\&$(p^d+1)(p^{2d}-1)(p^{3d}+1)$    \\
$p^{m-1}(p-1)$  &  $ p(1-\mu p^{3s-d}-\mu p^{3s-8d}+p^{m-d}+$\\&$\frac{p^{2m}+p^{2m-9d}+\mu p^{3s-3d}-\mu p^{3s-5d}-p^{m-4d}-p^{m-6d}}{p^d+1})(p^m-1) $    \\
$ p^m$  &  $ p-1$     \\
\noalign{\smallskip}
\hline
\end{tabular}
\end{center}
\end{table}
\end{theorem}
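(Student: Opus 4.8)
The plan is to reduce the computation of the weight distribution to the value distribution of the exponential sums $S(a,b,c)$ provided by Lemma \ref{value distribution}, by exploiting the Galois action on the cyclotomic field $\mathbb{Q}(\zeta_p)$ from Lemma \ref{Cyclotomic fields}. By (\ref{code-1}) a generic codeword has the form $\mathbf{c}_{a,b,c,h}=(Tr(ax^{p^{3l}+1}+bx^{p^l+1}+cx)+h)_{x\in\mathbb{F}_q}$, where the coordinate indexed by $x=0$ (the extended coordinate) equals $h$. The map $(a,b,c,h)\mapsto\mathbf{c}_{a,b,c,h}$ is injective: under the standing hypotheses on $l$ (the case $m=3d$ being excluded throughout), the integers $1$, $p^l+1$, $p^{3l}+1$ lie in three distinct $p$-cyclotomic cosets modulo $q-1$, each of size $m$, so the functions $x^{p^{3l}+1}$, $x^{p^l+1}$, $x$ and the constants are $\mathbb{F}_p$-linearly independent on $\mathbb{F}_q$; hence $\dim{\overline{{\mathbb{C}}^{\bot}}}^{\bot}=3m+1$ and the parametrization is a bijection onto the code. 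To find $wt(\mathbf{c}_{a,b,c,h})$ I would count its zero coordinates: with $N=\#\{x\in\mathbb{F}_q:Tr(ax^{p^{3l}+1}+bx^{p^l+1}+cx)=-h\}$, the orthogonality relation for the additive characters of $\mathbb{F}_p$ gives $N=\frac{1}{p}(p^m+\sum_{y\in\mathbb{F}_p^*}\zeta_p^{yh}S(ya,yb,yc))$, so that
$$wt(\mathbf{c}_{a,b,c,h})=p^m-N=p^{m-1}(p-1)-\frac{1}{p}\sum_{y\in\mathbb{F}_p^*}\zeta_p^{yh}\,S(ya,yb,yc).$$

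The crucial point is that $S(ya,yb,yc)=\sigma_y(S(a,b,c))$, where $\sigma_y\in\mathrm{Gal}(\mathbb{Q}(\zeta_p)/\mathbb{Q})$ is the automorphism with $\sigma_y(\zeta_p)=\zeta_p^y$ of Lemma \ref{Cyclotomic fields}; indeed $\sigma_y$ multiplies every $\zeta_p$-exponent by $y$ and $y\,Tr(\cdot)=Tr(y\,\cdot)$. By Lemma \ref{value distribution}, each value of $S(a,b,c)$ is either a rational integer — a signed power of $p$, including $S(0,0,0)=p^m$ — or of the form $\epsilon_0\, p^{j}\sqrt{p^*}$ with $\epsilon_0\in\{1,-1\}$; which of these occurs is visible from Tables \ref{1}, \ref{2}, \ref{3} in the respective cases $d'=d$ odd, $d'=d$ even, $d'=2d$. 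If $S(a,b,c)\in\mathbb{Q}$, then $\sigma_y$ fixes it and $\sum_{y\in\mathbb{F}_p^*}\zeta_p^{yh}S(ya,yb,yc)=S(a,b,c)\sum_{y\in\mathbb{F}_p^*}\zeta_p^{yh}$ equals $(p-1)S(a,b,c)$ if $h=0$ and $-S(a,b,c)$ if $h\neq 0$, giving weights $\frac{(p-1)(p^m-S(a,b,c))}{p}$ and $\frac{(p-1)p^m+S(a,b,c)}{p}$ respectively. If $S(a,b,c)=\epsilon_0\, p^{j}\sqrt{p^*}$, then $\sigma_y(\sqrt{p^*})=\eta'(y)\sqrt{p^*}$, so the character sum equals $S(a,b,c)\sum_{y\in\mathbb{F}_p^*}\eta'(y)\zeta_p^{yh}$, which vanishes when $h=0$ (weight $p^{m-1}(p-1)$) and, by Lemma \ref{Gauss}, equals $S(a,b,c)\,\eta'(h)\,G(\eta',\chi'_1)=\epsilon_0\,\eta'(h)(-1)^{\frac{p-1}{2}}p^{j+1}$ when $h\neq 0$, giving weight $p^{m-1}(p-1)-\epsilon_0\,\eta'(h)(-1)^{\frac{p-1}{2}}p^{j}$.

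Finally I would insert the value distribution of $S(a,b,c)$ from Tables \ref{1}--\ref{3} into these formulas and let $h$ range over $\mathbb{F}_p$. A triple $(a,b,c)$ with $S(a,b,c)=0$ produces $p$ codewords of weight $p^{m-1}(p-1)$; the triple $(0,0,0)$ produces the zero codeword and, for the $p-1$ nonzero $h$, codewords of weight $p^m$; a triple with rational $S(a,b,c)=\pm p^{j}$, $0<j<m$, produces one codeword of weight $\frac{(p-1)(p^m\mp p^{j})}{p}$ and $p-1$ codewords of weight $\frac{(p-1)p^m\pm p^{j}}{p}$; and a triple with $S(a,b,c)=\epsilon_0\, p^{j}\sqrt{p^*}$ produces one codeword of weight $p^{m-1}(p-1)$ and, since $\eta'(h)$ attains each sign $\frac{p-1}{2}$ times, $\frac{p-1}{2}$ codewords of each of the weights $p^{m-1}(p-1)\mp\epsilon_0(-1)^{\frac{p-1}{2}}p^{j}$. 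Collecting equal weights, summing the associated multiplicities, and absorbing the remaining sign ambiguities into the symbol $\varepsilon=(-1)^{\upsilon}$ then yields Tables \ref{4}, \ref{5} and \ref{6}.

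The main obstacle is organisational rather than conceptual. The value distributions in Tables \ref{1}--\ref{3} are long, and one must, in each of the three cases, decide which rows give rational values of $S(a,b,c)$ and which give multiples of $\sqrt{p^*}$, keep careful track of which distinct values of $S(a,b,c)$ collapse to a common codeword weight under the transformation above, and finally carry out the lengthy — but entirely routine — algebraic simplifications required to put the aggregated multiplicities into the closed forms displayed in Tables \ref{4}--\ref{6}.
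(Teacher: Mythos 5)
Your proposal follows essentially the same route as the paper's proof: write the weight as $p^m$ minus the number of zero coordinates, expand that count by orthogonality of the additive characters of $\mathbb{F}_p$, identify the resulting inner sum with $\sigma_y(S(a,b,c))$, evaluate it using Lemma~\ref{Cyclotomic fields} and the Gauss sum of Lemma~\ref{Gauss}, and then tally against the value distribution of Lemma~\ref{value distribution} --- this is exactly Eq.~(\ref{T(a,b,c,h)}) and Tables~\ref{8}--\ref{11} in the paper, and your per-triple counts over $h\in\mathbb{F}_p$ reproduce the stated multiplicities (your remark on the injectivity of $(a,b,c,h)\mapsto\mathbf{c}_{a,b,c,h}$ is a welcome addition the paper leaves implicit). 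One inaccuracy to repair: the values in Tables~\ref{1}--\ref{3} are not all rational integers or rational multiples of $\sqrt{p^*}$; roughly half the rows carry an additional factor $\zeta_p^j$ with $j\in\mathbb{F}_p^*$. This does not break the argument, because $\sigma_y(\zeta_p^jR)=\zeta_p^{yj}\sigma_y(R)$ simply replaces $h$ by $h+j$ throughout your formulas (which is precisely how the conditions in the paper's Table~\ref{8} are phrased), and the tally over all $h\in\mathbb{F}_p$ is unaffected; but your dichotomy should be applied to the factor $R$ rather than to $S(a,b,c)$ itself.
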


  One can see that the code is at most $10$-weight if $d'=d$ is odd or $d'=2d$, at most $14$-weight if $d'=d$ is even. 



\begin{theorem}\label{$2-$design-1}
Let $m\geq 6$ be a positive integer. Then the supports of the all codewords of weight $i>0$ in ${\overline{{\mathbb{C}}^{\bot}}}^{\bot}$ form a $2$-design, provided that $A_i\neq0.$
\end{theorem}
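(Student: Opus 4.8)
The plan is to invoke Theorem~\ref{2-design}, which already says that the supports of all codewords of a fixed nonzero weight in an \emph{affine-invariant} code form a $2$-design. So the whole task reduces to verifying that ${\overline{{\mathbb{C}}^{\bot}}}^{\bot}$ is affine-invariant. First I would observe that $\mathbb{C}$ is a cyclic code of length $n=q-1$ with parity-check polynomial $h_1(x)h_{p^l+1}(x)h_{p^{3l}+1}(x)$, so its dual $\mathbb{C}^{\bot}$ is cyclic of length $q-1$, and I would identify the defining set $T$ of $\mathbb{C}^{\bot}$ explicitly: it is the complement in $\{0,1,\dots,q-2\}$ of the union of the $p$-cyclotomic cosets $C_1 \cup C_{p^l+1} \cup C_{p^{3l}+1}$. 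Passing to the extended code $\overline{\mathbb{C}^{\bot}}$ of length $q=p^m$, its defining set is $\overline{T} = T \cup \{0\}$ (adjoining the overall parity check corresponds to adjoining the zero coordinate in the $p$-adic bookkeeping of Lemma~\ref{Kasami-Lin-Peterson}).

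Next I would apply part~(1) of Lemma~\ref{Kasami-Lin-Peterson}: I must show that whenever $s \in \overline{T}$ and $r \preceq s$ in the $p$-adic partial order, then $r \in \overline{T}$. Equivalently, writing $\overline{T}^{c}$ for the complement, I must show $\overline{T}^{c}$ is closed upward: if $r \in \overline{T}^{c}$ and $r \preceq s$ then $s \in \overline{T}^{c}$. The set $\overline{T}^{c}$ is exactly $\{0\}^{c}$-part governed by $C_1 \cup C_{p^l+1} \cup C_{p^{3l}+1}$; since $0$ is in $\overline{T}$, the minimal elements of $\overline{T}^{c}$ must be among the coset leaders $1$, $p^l+1$, $p^{3l}+1$ (and all their cyclotomic conjugates, i.e.\ $p$-ary cyclic shifts of the exponent's digit string). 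The $p$-adic digit patterns here are: $1$ has a single digit equal to $1$; $p^l+1$ and $p^{3l}+1$ each have exactly two digits equal to $1$ (at positions $0,l$ and $0,3l$ respectively, using $1 \le l \le m-1$ and $m \nmid 3l$ so that these are genuinely two distinct positions with digit $1$ each). I would then check directly that the union of the $p$-cyclotomic cosets of these three elements, together with $0$, has complement closed upward under $\preceq$: this is a finite, purely combinatorial digit-counting verification — any element $\preceq$ an element with digit-sum $\le 2$ has digit-sum $\le 2$, hence lies in the span of these cosets and is in $\overline{T}^{c}$. This shows $\overline{\mathbb{C}^{\bot}}$ is affine-invariant. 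Finally, by part~(2) of Lemma~\ref{Kasami-Lin-Peterson}, the dual ${\overline{{\mathbb{C}}^{\bot}}}^{\bot}$ is also affine-invariant, and Theorem~\ref{2-design} gives the conclusion.

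\textbf{Where the work lies.} The routine but slightly delicate step is the explicit description of the defining set $\overline{T}$ of $\overline{\mathbb{C}^{\bot}}$ and the digit-pattern check that its complement is $\preceq$-closed; one has to be careful about the exponent of $x^{p^{3l}+1}$ versus $x^{p^l+1}$ and about the conditions $1 \le l \le m-1$, $l \notin \{m/6, m/4, m/2, 3m/4, 5m/6\}$, $m \neq 3d$ that guarantee the three monomials $x^{p^{3l}+1}, x^{p^l+1}, x$ and the constant $h$ contribute independent coordinates so that the dimension is $3m+1$ as claimed in Theorem~\ref{weight1}. The main conceptual obstacle is simply making the identification of $\overline{T}$ precise — once that is in hand, affine-invariance of $\overline{\mathbb{C}^{\bot}}$ (hence of its dual) follows from Lemma~\ref{Kasami-Lin-Peterson}, and there is nothing further to prove for the $2$-design statement itself beyond citing Theorem~\ref{2-design}.
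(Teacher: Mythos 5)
Your overall strategy is exactly the paper's: reduce the theorem to the affine-invariance of $\overline{{\mathbb{C}}^{\bot}}$ via Lemma~\ref{Kasami-Lin-Peterson}(1), pass to the dual by Lemma~\ref{Kasami-Lin-Peterson}(2), and conclude with Theorem~\ref{2-design}. But the one substantive step is executed with a genuine error. You identify the defining set $T$ of ${\mathbb{C}}^{\bot}$ as the \emph{complement} of $C_1\cup C_{p^l+1}\cup C_{p^{3l}+1}$. That is backwards: since $\mathbb{C}$ has check polynomial $h_1(x)h_{p^l+1}(x)h_{p^{3l}+1}(x)$, it is $\mathbb{C}$ itself whose defining set is the complement of the union, while the defining set of the dual is (up to the usual sign convention) the union $C_1\cup C_{p^l+1}\cup C_{p^{3l}+1}$ of size $3m$, so that $\overline{T}=C_1\cup C_{p^l+1}\cup C_{p^{3l}+1}\cup\{0\}$ has size $3m+1$, matching $\dim{\overline{{\mathbb{C}}^{\bot}}}^{\bot}=3m+1$. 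This matters because Lemma~\ref{Kasami-Lin-Peterson}(1) requires $\overline{T}$ itself to be closed downward under $\preceq$. With your identification, $\overline{T}$ would be the complement of a small set of digit-sum-at-most-two elements, and such a complement is \emph{not} downward closed (for generic $l$ it contains $p^2+1$ but not $1$), so the lemma would not apply and the argument would collapse. Your write-up survives only by silently switching sets at the end: the condition you actually verify, namely that ``the union of the cosets of these three elements, together with $0$, has complement closed upward,'' is downward-closedness of the union-plus-zero --- the set you had declared to be $\overline{T}^{c}\cup\{0\}$, not your $\overline{T}$.

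Second, even for the correct set, your verification leans on the claim that any element of digit-sum at most $2$ ``lies in the span of these cosets,'' which is false: the element $2$ (a single $p$-adic digit equal to $2$), and $p^i+p^j$ with $j-i\not\equiv\pm l,\pm 3l\pmod m$, have digit-sum at most $2$ but lie in none of $C_1$, $C_{p^l+1}$, $C_{p^{3l}+1}$. What actually makes the argument work --- and what the paper does --- is the finer pointwise observation: every $v\in\overline{T}$ has all $p$-adic digits in $\{0,1\}$ with at most two nonzero, so $r\preceq v$ forces $r=0$, or $r=p^i\in C_1$, or $r=v$; in each case $r\in\overline{T}$. With the defining set corrected and this pointwise step substituted for the digit-sum claim, your proof coincides with the paper's.
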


For the parameters of the 2-designs derived from ${\overline{{\mathbb{C}}^{\bot}}}^{\bot}$, we have the following.

\begin{theorem}\label{parameter-1}
Let $\mathcal{B}$ be the set of the supports of the codewords of ${\overline{{\mathbb{C}}^{\bot}}}^{\bot}$ with weight $i,$ where $A_i\neq 0.$ Then for $m\geq 6$, ${\overline{{\mathbb{C}}^{\bot}}}^{\bot}$ holds $2$-$(p^m, i, \lambda)$ designs for the following pairs:

(1) If $d'=d$ is odd, then
\begin{itemize}
\item $(i, \lambda)=(p^{m-1}(p-1), (p^m-p^{m-1}-1)(p^{2m+2d-1}+p^{2m+d}-p^{2m+d-1}\\-p^{2m}-p^{2m-1}+p^{2m-2d}-p^{2m-3d}+p^{2m-3d-1}+
    p^{m+2d-1}-p^m+p^{m-2d}-p^{m-2d-1}+p^{2d}-1)/(p^{2d}-1));$
\item $(i, \lambda)=(p^{\frac{m-1}{2}}(p^{\frac{m+1}{2}}-p^{\frac{m-1}{2}}+\varepsilon(-1)^{\frac{p-1}{2}}), p^{\frac{m+4d-1}{2}}(p^{\frac{m+1}{2}}-p^{\frac{m-1}{2}}+\varepsilon\\(-1)^{\frac{p-1}{2}})[p^{\frac{m-1}{2}}
    (p^{\frac{m+1}{2}}-p^{\frac{m-1}{2}}+\varepsilon(-1)^{\frac{p-1}{2}})-1](p^m-p^{m-d}-p^{m-2d}+1)/2(p^{2d}-1));$
\item $(i, \lambda)=( p^{\frac{m+d-2}{2}}(p-1)(p^{\frac{m-d}{2}}+\varepsilon), \frac{1}{2}p^{m-d-1}[p^{\frac{m+d-2}{2}}(p-1)(p^{\frac{m-d}{2}}+\varepsilon)-1](p^{m-d}-1));$
\item $(i, \lambda)=( p^{\frac{m+d-2}{2}}(p^{\frac{m-d+2}{2}}-p^{\frac{m-d}{2}}+\varepsilon), \frac{1}{2}p^{m-d-1}(p^{\frac{m-d+2}{2}}-p^{\frac{m-d}{2}}+\varepsilon)[\\p^{\frac{m+d-2}{2}}(p^{\frac{m-d+2}{2}}
    -p^{\frac{m-d}{2}}+\varepsilon)-1](p^{\frac{m-d}{2}}+\varepsilon));$
\item$(i,\lambda)=(p^{\frac{m+2d-1}{2}}(p^{\frac{m-2d+1}{2}}-p^{\frac{m-2d-1}{2}}+\varepsilon(-1)^{\frac{p-1}{2}}), \frac{1}{2}p^{\frac{m-2d-1}{2}}(p^{\frac{m-2d+1}{2}}-p^{\frac{m-2d-1}{2}}+\varepsilon(-1)^{\frac{p-1}{2}})
    [p^{\frac{m+2d-1}{2}}(p^{\frac{m-2d+1}{2}}-p^{\frac{m-2d-1}{2}}+\varepsilon(-1)^{\frac{p-1}{2}})-1]\\(p^{m-d}-1)/
    (p^{2d}-1));$
\end{itemize}
(2) If $d'=d$ is even and $m\geq 8$, then
\begin{itemize}
\item $(i, \lambda)=(p^{s-1}(p-1)(p^s+\varepsilon), p^{s+2d-1}(p^s+\varepsilon)[p^{s-1}(p-1)(p^s+1)-1](p^m-p^{m-d}-p^{m-2d}+1)/2(p^{2d}-1));$
\item $(i, \lambda)=(p^{s-1}(p^{s+1}-p^s+\varepsilon), \frac{1}{2}p^{s+2d-1}(p^{s+1}-p^s+\varepsilon) [p^{s-1}(p^{s+1}-p^s+\varepsilon)-1](p^m-p^{m-d}-p^{m-2d}+1)/(p^{2d}-1));$
\item $(i, \lambda)=(p^{\frac{m+d-2}{2}}(p-1)(p^{\frac{m-d}{2}}+\varepsilon), \frac{1}{2}p^{m-d-1}(p^{m-d}-1)[p^{\frac{m+d-2}{2}}(p-1)(p^{\frac{m-d}{2}}+\varepsilon)-1]);$
\item $(i, \lambda)=(p^{\frac{m+d-2}{2}}(p^{\frac{m-d+2}{2}}-p^{\frac{m-d}{2}}+\varepsilon),  \frac{1}{2}p^{m-d-1}(p^{\frac{m-d+2}{2}}-p^{\frac{m-d}{2}}+\varepsilon)[\\p^{\frac{m+d-2}{2}}(p^{\frac{m-d+2}{2}}
    -p^{\frac{m-d}{2}}+\varepsilon)-1](p^{\frac{m-d}{2}}+\varepsilon));$
\item $(i, \lambda)=( p^{\frac{m+2d-2}{2}}(p-1)(p^{\frac{m-2d}{2}}+\varepsilon), \frac{1}{2}p^{s-d-1}(p^{\frac{m-2d}{2}}+\varepsilon)[p^{\frac{m+2d-2}{2}}(p-1)(p^{\frac{m-2d}{2}}+\varepsilon)-1](p^{m-d}
    -1)/(p^{2d}-1));$
\item $(i, \lambda)=(p^{\frac{m+2d-2}{2}}(p^{\frac{m-2d+2}{2}}-p^{\frac{m-2d}{2}}+\varepsilon), \frac{1}{2}p^{s-d-1}(p^{\frac{m-2d+2}{2}}-p^{\frac{m-2d}{2}}+\varepsilon)[p^{\frac{m+2d-2}{2}}(p^{\frac{m-2d+2}
    {2}}-p^{\frac{m-2d}{2}}+\varepsilon)-1](p^{m-d}-1)/(p^{2d}-1));$
\item $(i, \lambda)=(p^{m-1}(p-1), (p^m-p^{m-1}-1)(p^{2m-d}-p^{2m-2d}+p^{2m-3d}-p^{m-2d}+1)).$
\end{itemize}
For $m=6$, ${\overline{{\mathbb{C}}^{\bot}}}^{\bot}$ also holds $2$-$(p^m, i, \lambda)$ designs for the following pairs:
\begin{itemize}
\item $(i, \lambda)=(p^2(p-1)(p^3+\varepsilon), p^6(p^3+\varepsilon)[p^2(p-1)(p^3+\varepsilon)-1](p^6\\-p^4-p^2+1)/2(p^4-1));$
\item $(i, \lambda)=(p^2(p^4-p^3+\varepsilon), \frac{1}{2}p^6(p^4-p^3+\varepsilon) [p^2(p^4-p^3
    +\varepsilon)-1](p^6\\-p^4-p^2+1)/(p^4-1));$
\item $(i, \lambda)=(p^3(p-1)(p^2+\varepsilon), \frac{1}{2}p^3(p^4-1)[p^3(p-1)(p^2+\varepsilon)-1]);$
\item $(i, \lambda)=(p^3(p^3-p^2+\varepsilon),  \frac{1}{2}p^3(p^3-p^2+\varepsilon)[p^3(p^3
    -p^2+\varepsilon)-1](p^2\\+\varepsilon));$
\item $(i, \lambda)=(p^4(p-1)^2, \frac{1}{2}(p-1)[p^4(p-1)^2-1]);$
\item $(i, \lambda)=(p^4(p^2-p+\varepsilon), \frac{1}{2}(p^2-p+\varepsilon)[p^4(p^2-p+\varepsilon)-1]);$
\item $(i, \lambda)=(p^5(p-1), (p^6-p^5-1)(p^{10}-p^8+p^6-p^2+1)).$
\end{itemize}
(3) If $d'=2d$, then
\begin{itemize}
\item $(i, \lambda)=(p^{s-1}(p-1)(p^s-\mu), p^{s-1}(p^s-\mu)[p^{s-1}(p-1)(p^s-\mu)-1](p^{m+6d}-p^{m+4d}-p^{m+d}+\mu p^{s+5d}-\mu p^{s+4d}+p^{6d})/(p^d+1)(p^{2d}-1)(p^{3d}+1));$
\item $(i, \lambda)=(p^{s-1}(p^{s+1}-p^s+\mu), p^{s-1}(p^{s+1}-p^s+\mu)[p^{s-1}(p^{s+1}-p^s+\mu)-1](p^{m+6d}-p^{m+4d}-p^{m+d}+\mu p^{s+5d}-\mu p^{s+4d}+p^{6d})/(p^d+1)\\(p^{2d}-1)(p^{3d}+1));$
\item $(i, \lambda)=(p^{s+d-1}(p-1)(p^{s-d}+\mu), p^{s-d-1}(p^{s-d}+\mu)[p^{s+d-1}(p-1)(\\p^{s-d}+\mu)-1](p^{m+3d}+p^{m+2d}-p^m-p^{m-d}-p^{m-2d}-\mu p^{s+3d}+\mu p^s+p^{3d})/(p^d+1)^2(p^{2d}-1));$
\item $(i, \lambda)=(p^{s+d-1}(p^{s-d+1}-p^{s-d}-\mu),  p^{s-d-1}(p^{s-d+1}-p^{s-d}-\mu)[\\p^{s+d-1}(p^{s-d+1}-p^{s-d}-\mu)-1](p^{m+3d}+p^{m+2d}-p^m-p^{m-d}
    -\\p^{m-2d}-\mu p^{s+3d}+\mu p^s+p^{3d})/(p^d+1)^2(p^{2d}-1));$
\item $(i, \lambda)=(p^{s+2d-1}(p-1)(p^{s-2d}-\mu), p^{s-2d-1}(p^{s-2d}-\mu)[p^{s+2d-1}(\\(p-1)p^{s-2d}-\mu)-1](p^{s-d}+\mu)(p^{s+d}+p^s-p^{s-2d}-\mu p^d)/(p^d+1)^2(p^{2d}-1));$
\item $(i, \lambda)=(p^{s+2d-1}(p^{s-2d+1}-p^{s-2d}+\mu), p^{s-2d-1}(p^{s-2d+1}-p^{s-2d}+\mu)[p^{s+2d-1}(p^{s-2d+1}-p^{s-2d}+\mu)-1](p^{s-d}+\mu)(p^{s+d}+p^s-p^{s-2d}-\mu p^d)/(p^d+1)^2(p^{2d}-1));$
\item $(i, \lambda)=(p^{s+3d-1}(p-1)(p^{s-3d}+\mu), p^{s-3d-1}(p^{s-3d}+\mu)[p^{s+3d-1}(\\(p-1)p^{s-3d}+\mu)-1](p^{s-2d}-\mu)(p^{s-d}+\mu)/(p^d+1)(p^{2d}-1)(p^{3d}+1));$
\item $(i, \lambda)=(p^{s+3d-1}(p^{s-3d+1}-p^{s-3d}-\mu), p^{s-3d-1}(p^{s-3d+1}-p^{s-3d}-\mu)[p^{s+3d-1}(p^{s-3d+1}-p^{s-3d}-\mu)-1](p^{s-2d}-\mu)(p^{s-d}+\mu)/(p^d+1)(p^{2d}-1)(p^{3d}+1));$
\item $(i, \lambda)=(p^{m-1}(p-1), (p^m-p^{m-1}-1)(1-\mu p^{3s-d}-\mu p^{3s-8d}+p^{m-d}\\+\frac{p^{2m}+p^{2m-9d}+\mu p^{3s-3d}-\mu p^{3s-5d}-p^{m-4d}-p^{m-6d}}{p^d+1})).$
\end{itemize}
\end{theorem}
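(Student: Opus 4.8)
The plan is to feed the weight distribution of Theorem~\ref{weight1} into Eq.~(\ref{condition}) and Theorem~\ref{design parameter}. Since ${\overline{{\mathbb{C}}^{\bot}}}^{\bot}$ is affine-invariant --- which is exactly what makes Theorem~\ref{$2-$design-1} go through, via Lemma~\ref{Kasami-Lin-Peterson} and Theorem~\ref{2-design} --- for every $i$ with $A_i\ne 0$ the supports of the weight-$i$ codewords already form a $2$-$(p^m,i,\lambda)$ design, so the only remaining task is to evaluate $\lambda$. Writing $b$ for the number of blocks, Eq.~(\ref{condition}) with $t=2$, $v=p^m$ and $k=i$ gives
$$\lambda=\frac{b\binom{i}{2}}{\binom{p^m}{2}}=\frac{b\,i(i-1)}{p^m(p^m-1)},$$
so everything reduces to counting the distinct supports of the weight-$i$ codewords.

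To count these supports I would first read the minimum distance $\delta$ of ${\overline{{\mathbb{C}}^{\bot}}}^{\bot}$ off Tables~\ref{4}, \ref{5} and~\ref{6}: in each case every nonzero weight has the form $p^{m-1}(p-1)+(\text{correction})$, where the correction is $\pm$ a power of $p$ times a small factor (or $0$, or $+p^{m-1}$ for the weight-$p^m$ codewords), and $\delta$ is obtained by selecting the most negative correction --- this is where the signs $\varepsilon$, $\mu$ and $(-1)^{(p-1)/2}$ must be tracked. Next I would compute the integer $w$ of Theorem~\ref{design parameter}: using $\frac{p^m-1}{p-1}=1+p+\cdots+p^{m-1}$, the condition $w-\lfloor\frac{w+p-2}{p-1}\rfloor<\delta$ rewrites as a polynomial inequality in $p$ checkable term by term. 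One then verifies that every nonzero weight $i$ in the statement satisfies $\delta\le i\le w$; for such $i$, Theorem~\ref{design parameter} forces any two weight-$i$ codewords with the same support to be scalar multiples of one another, so the $A_i$ codewords of weight $i$ split into $A_i/(p-1)$ classes of size $p-1$ with pairwise distinct supports, whence $b=A_i/(p-1)$. Substituting this and the value of $A_i$ from the appropriate table into the displayed identity gives
$$\lambda=\frac{A_i\,i(i-1)}{(p-1)\,p^m(p^m-1)},$$
and a lengthy but mechanical simplification --- in which the factor $p^m$ or $p^m-1$ cancels against a matching factor of $A_i$ --- produces exactly the pairs $(i,\lambda)$ in all three cases. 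The weight-$p^m$ codewords (there are $p-1$ of them) are discarded, as $k=v$ yields only the trivial design.

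The genuinely delicate point --- and the reason the case $d'=d$ even is split according to whether $m=6$ --- is the range condition $\delta\le i\le w$. When $m$ is large relative to $d$, $\delta$ is close to $p^{m-1}(p-1)$, which forces $w\ge p^m$, so $\delta\le i\le w$ holds for every nonzero weight and nothing is lost. But for $d'=d$ even with $m=6$ one has $d=2$ and $s=3$, so the largest correction is $p^{\frac{m+2d-2}{2}}(p-1)=p^{m-2}(p-1)$ and hence $\delta=p^{4}(p-1)^{2}$, which is small enough that $w<p^m$; concretely, the weight $p^{\frac{m+2d-2}{2}}(p-1)(p^{\frac{m-2d}{2}}+1)=p^{4}(p-1)(p+1)=p^{6}-p^{4}$ then exceeds $w$, so Theorem~\ref{design parameter} does not apply to it and it must be dropped from the $m=6$ list (every other weight survives). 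Thus the main obstacle is not conceptual but bookkeeping: pinning down $\delta$ correctly among the sign-dependent weight values, establishing $\delta\le i\le w$ for each surviving weight in all three cases, carrying out the separate check for $m=6$, and pushing through the cancellations that turn $\dfrac{A_i\,i(i-1)}{(p-1)p^m(p^m-1)}$ into the stated closed forms. Magma computations for small $p$ and $m$ then provide an independent check of the resulting parameters.
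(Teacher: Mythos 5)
Your proposal follows exactly the paper's route: affine-invariance (Theorem \ref{$2-$design-1}) gives the designs, Theorem \ref{design parameter} gives $b=A_i/(p-1)$ distinct supports, and Eq.~(\ref{condition}) then yields $\lambda=A_i\,i(i-1)/\bigl((p-1)p^m(p^m-1)\bigr)$, which is simplified case by case from Tables \ref{4}--\ref{6}. In fact you are more explicit than the paper about verifying the hypothesis $\delta\le i\le w$ of Theorem \ref{design parameter}, and your identification of the excluded weight $p^4(p^2-1)$ in the $m=6$, $d'=d$ even case matches precisely the caveat the authors relegate to their concluding remarks.
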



\section{Proofs of the main results}\label{section-4}
Now, we are ready to present the proofs of our main results. We begin this section by proving the weight distribution of the code ${\overline{{\mathbb{C}}^{\bot}}}^{\bot}$.
\begin{proof}[Proof of Theorem \ref{weight1}]
For each nonzero codeword $\mathbf{c}(a,b,c,h)=(c_0, c_1,\ldots, c_n)$ in ${\overline{{\mathbb{C}}^{\bot}}}^{\bot},$ the Hamming weight of $\mathbf{c}(a,b,c,h)$ is
\begin{equation}\label{weight formula-1}
w_H(\mathbf{c}(a,b,c,h))=p^m-T(a,b,c,h),
\end{equation}
where
 \begin{eqnarray*}
 T(a,b,c,h)=|\{x\in \mathbb{F}_q:&  Tr(ax^{p^{3l}+1}+bx^{p^l+1}+cx)+h=0,\\
              &a,b,c \in \mathbb{F}_q, h\in \mathbb{F}_p\}|.
 \end{eqnarray*}
 With the  orthogonality relations for character sums, and the definitions of $S(a,b,c)$ and $\sigma_y$, we have the following equation
\begin{eqnarray}\label{T(a,b,c,h)}
T(a,b,c,h)&=&\frac{1}{p}\sum\limits_{y \in \mathbb{F}_p}\sum\limits_{x \in \mathbb{F}_q}\zeta_p^{y[Tr(ax^{p^{3l}+1}+bx^{p^l+1}+cx)+h]}\nonumber\\
&=&\frac{1}{p}\sum\limits_{y \in \mathbb{F}_p}\zeta_p^{yh}\sum\limits_{x \in \mathbb{F}_q}\zeta_p^{yTr(ax^{p^{3l}+1}+bx^{p^l+1}+cx)}\nonumber\\
&=&p^{m-1}+\frac{1}{p}\sum\limits_{y \in \mathbb{F}_p^*}\zeta_p^{yh}\sigma_y(S(a,b,c)).
\end{eqnarray}

If $d'=d$ is odd, with the value of $S(a,b,c)$ given in  Lemma \ref{value distribution} and the result on cyclotomic fields presented in Lemma \ref{Cyclotomic fields}, we obtain explicitly $\sigma_y(S(a,b,c))$. Therefore,  plugging $\sigma_y(S(a,b,c))$ in the equation above and applying Lemma \ref{Gauss}, we have the corresponding value distribution of $T(a,b,c,h)$ given in Table \ref{8} after a tedious calculation, where $j\in\mathbb{F}_p$ which is different from that of Lemma \ref{value distribution}.  Thus, with Table \ref{8} and Eq.(\ref{weight formula-1}), we can present the weight distribution of the code ${\overline{{\mathbb{C}}^{\bot}}}^{\bot}$ in Table \ref{9}.

\begin{table}
\begin{center}
\caption{The value of $T(a,b,c,h)$ when $d'=d$ is odd ($j\in \mathbb{F}_p$)}\label{8}
\begin{tabular}{ll}
\hline\noalign{\smallskip}
Value  &  Corresponding Condition   \\
\noalign{\smallskip}
\hline\noalign{\smallskip}
$p^{m-1}$ &  $S(a,b,c)=0~ \mathrm{or}~ S(a,b,c)=\varepsilon\sqrt{p^*}\zeta_p^jp^{\frac{m-1}{2}}~ \mathrm{and}~ h+j $ \\&$= 0~\mathrm{or}~ S(a,b,c)=\varepsilon\sqrt{p^*}\zeta_p^jp^{\frac{m+2d-1}{2}}~ \mathrm{and}~ h+j = 0 $ \\
$p^{m-1}+\varepsilon(-1)^{\frac{p-1}{2}}p^{\frac{m-1}{2}} $ &  $ S(a,b,c)=\pm\varepsilon\sqrt{p^*}p^{\frac{m-1}{2}}\zeta_p^j~\mathrm{and}~ \eta'(h+j)\in\{\varepsilon, -\varepsilon\} $\\
$p^{m-1}+\varepsilon p^{\frac{m+d-2}{2}}(p-1) $  &  $ S(a,b,c)=\varepsilon p^{\frac{m+d}{2}}\zeta_p^j~\mathrm{and}~ h+j=0$    \\
$ p^{m-1}-\varepsilon p^{\frac{m+d-2}{2}} $  &  $S(a,b,c)=\varepsilon p^{\frac{m+d}{2}}\zeta_p^j~\mathrm{and}~ h+j\neq0$     \\
$p^{m-1}+\varepsilon(-1)^{\frac{p-1}{2}}p^{\frac{m+2d-1}{2}}$  &  $S(a,b,c)=\pm\varepsilon \sqrt{p^*}p^{\frac{m+2d-1}{2}}\zeta_p^j~\mathrm{and}~ \eta'(h+j)\in\{\varepsilon, -\varepsilon\}$\\
$p^m$ & $S(a,b,c)=p^m~ \mathrm{and}~ h = 0 $\\
$ 0 $  &  $ S(a,b,c)= p^m~\mathrm{and}~ h\neq0$      \\
\noalign{\smallskip}
\hline
\end{tabular}
\end{center}
\end{table}

\begin{table}
\begin{center}
\caption{The weight distribution of  ${\overline{{\mathbb{C}}^{\bot}}}^{\bot}$  when $d'=d$ is odd}\label{9}
\begin{tabular}{ll}
\hline\noalign{\smallskip}
Value  &  Multiplicity   \\
\noalign{\smallskip}
\hline\noalign{\smallskip}
$p^{m-1}(p-1)$  &  $M_{1\upsilon}+\frac{p-1}{2}M_{2\upsilon}+M_{5\upsilon}+\frac{p-1}{2}M_{6\upsilon}$\\&$+pM_7$ \\
$p^{\frac{m-1}{2}}(p^{\frac{m+1}{2}}-p^{\frac{m-1}{2}}+\varepsilon(-1)^{\frac{p-1}{2}})$  &  $\frac{p-1}{2}M_{1\upsilon}+\frac{(p-1)^2}{2}M_{2\upsilon}$    \\
$ p^{\frac{m+d-2}{2}}(p-1)(p^{\frac{m-d}{2}}-\varepsilon)$  &  $M_{3\upsilon}+(p-1)M_{4\upsilon}$     \\
$ p^{\frac{m+d-2}{2}}(p^{\frac{m-d+2}{2}}-p^{\frac{m-d}{2}}+\varepsilon)$  &  $(p-1)M_{3\upsilon}+(p-1)^2M_{4\upsilon}$  \\
$ p^{\frac{m+2d-1}{2}}(p^{\frac{m-2d+1}{2}}-p^{\frac{m-2d-1}{2}}+\varepsilon(-1)^{\frac{p-1}{2}})$ &  $ \frac{p-1}{2}M_{5\upsilon}+\frac{(p-1)^2}{2}M_{6\upsilon}$  \\
$p^m$ &  $ p-1 $  \\
\noalign{\smallskip}
\hline
\end{tabular}
\end{center}
\end{table}

With the similar proof idea, we can get  Table \ref{10} if   $d'=d$ is even, and
Table \ref{11} if  $d'=2d$, respectively.
\begin{table}
\begin{center}
\caption{The weight distribution of ${\overline{{\mathbb{C}}^{\bot}}}^{\bot}$ when $d'=d$ is even}\label{10}
\begin{tabular}{ll}
\hline\noalign{\smallskip}
Value  &  Multiplicity    \\
\noalign{\smallskip}
\hline\noalign{\smallskip}
$p^{s-1}(p-1)(p^s-\varepsilon)$  &  $M_{1\upsilon}+(p-1)M_{2\upsilon}$ \\
$p^{s-1}(p^{s+1}-p^s+\varepsilon)$  &  $ (p-1)M_{1\upsilon}+(p-1)^2M_{2\upsilon} $    \\
$p^{\frac{m+d-2}{2}}(p-1)(p^{\frac{m-d}{2}}-\varepsilon) $  &  $M_{3\upsilon}+(p-1)M_{4\upsilon}$     \\
$p^{\frac{m+d-2}{2}}(p^{\frac{m-d+2}{2}}-p^{\frac{m-d}{2}}+\varepsilon)$ &  $(p-1)M_{3\upsilon}+(p-1)^2M_{4\upsilon}$  \\
$p^{\frac{m+2d-2}{2}}(p-1)(p^{\frac{m-2d}{2}}-\varepsilon)$ &  $M_{5\upsilon}+(p-1)M_{6\upsilon}$  \\
$p^{\frac{m+2d-2}{2}}(p^{\frac{m-2d+2}{2}}-p^{\frac{m-2d}{2}}+\varepsilon)$ &  $(p-1)M_{5\upsilon}+(p-1)^2M_{6\upsilon} $  \\
$p^{m-1}(p-1)$ &  $pM_7$\\
$p^m$ &  $p-1 $  \\
\noalign{\smallskip}
\hline
\end{tabular}
\end{center}
\end{table}

\begin{table}
\begin{center}
\caption{The weight distribution of ${\overline{{\mathbb{C}}^{\bot}}}^{\bot}$ when $d'=2d$}\label{11}
\begin{tabular}{ll}
\hline\noalign{\smallskip}
Value  &  Multiplicity   \\
\noalign{\smallskip}
\hline\noalign{\smallskip}
$p^{s-1}(p-1)(p^s-\mu)$  &  $M_1+(p-1)M_2$ \\
$ p^{s-1}(p^{s+1}-p^s+\mu)$  &  $ (p-1)M_1+(p-1)^2M_2 $    \\
$p^{s+d-1}(p-1)(p^{s-d}+\mu)$  &  $M_3+(p-1)M_4 $     \\
$ p^{s+d-1}(p^{s-d+1}-p^{s-d}-\mu)$  &  $(p-1)M_3+(p-1)^2M_4$     \\
$p^{s+2d-1}(p-1)(p^{s-2d}-\mu) $  &  $M_5+(p-1)M_6$     \\
$p^{s+2d-1}(p^{s-2d+1}-p^{s-2d}+\mu)$  &  $(p-1)M_5+(p-1)^2M_6 $  \\
$p^{s+3d-1}(p-1)(p^{s-3d}+\mu)$ &  $M_7+(p-1)M_8$  \\
$p^{s+3d-1}(p^{s-3d+1}-p^{s-3d}-\mu)$ &  $(p-1)M_7+(p-1)^2M_8$  \\
$p^{m-1}(p-1)$ &  $pM_9$  \\
$p^m$ &  $p-1 $  \\
\noalign{\smallskip}
\hline
\end{tabular}
\end{center}
\end{table}

Thus, we complete the proof of Theorem \ref{weight1}.
\end{proof}


Now, we will use Lemma \ref{Kasami-Lin-Peterson} to  prove that $\overline{{\mathbb{C}}^{\bot}}$ is affine-invariant.

\begin{lemma}\label{affine invariant}
The extended code $\overline{{\mathbb{C}}^{\bot}}$ is affine-invariant.
\end{lemma}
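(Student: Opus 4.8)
The plan is to apply part (1) of Lemma \ref{Kasami-Lin-Peterson} to the code $\overline{{\mathbb{C}}^{\bot}}$, which means I must identify the defining set $\overline{T}$ of $\overline{{\mathbb{C}}^{\bot}}$ explicitly and then verify the combinatorial closure condition: whenever $s \in \overline{T}$ and $r \preceq s$ with $r \in \mathcal{P} = \{0,1,\dots,p^m-1\}$, then $r \in \overline{T}$.

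\textbf{Step 1: Determine the defining set.} First I would recall that $\mathbb{C}$ has parity-check polynomial $h_1(x)h_{p^l+1}(x)h_{p^{3l}+1}(x)$, so its dual $\mathbb{C}^{\bot}$ is the cyclic code whose nonzeros are $\alpha^{-1}, \alpha^{-(p^l+1)}, \alpha^{-(p^{3l}+1)}$ and their conjugates; equivalently the defining set of $\mathbb{C}^{\bot}$ (as a cyclic code of length $n=p^m-1$) is $\mathbb{Z}_n \setminus (C_{-1}\cup C_{-(p^l+1)} \cup C_{-(p^{3l}+1)})$. Passing to the extended code, the defining set $\overline{T}$ of $\overline{{\mathbb{C}}^{\bot}}$ is obtained from the defining set of $\mathbb{C}^{\bot}$ by adjoining the coordinate $0$ (the usual rule: $\overline{T} = T \cup \{0\}$ when $0 \notin T$). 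The cleanest way to describe $\overline{T}$ is via its complement: $\mathcal{P} \setminus \overline{T}$ consists exactly of the $p$-cyclotomic cosets of $1$, of $p^l+1$, and of $p^{3l}+1$ (these are precisely the exponents appearing in the trace representation \eqref{code-1}, reflecting that ${\overline{{\mathbb{C}}^{\bot}}}^{\bot}$ has defining set which is the complement of $C_1 \cup C_{p^l+1} \cup C_{p^{3l}+1} \cup \{0\}$).

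\textbf{Step 2: Verify the closure (monotonicity) condition.} By Lemma \ref{Kasami-Lin-Peterson}(1), affine-invariance of $\overline{{\mathbb{C}}^{\bot}}$ is equivalent to: $\mathcal{P}\setminus\overline{T}$ is closed upward under $\preceq$, i.e. if $r \in \mathcal{P}\setminus\overline{T}$ and $r \preceq s$ then $s \in \mathcal{P}\setminus\overline{T}$. Since $\mathcal{P}\setminus\overline{T}$ is a union of $p$-cyclotomic cosets, and $\preceq$ is preserved by multiplication by $p$ modulo $n$ (cyclic shift of $p$-adic digits), it suffices to check the condition on coset leaders. Concretely I must show: if $r \preceq 1$, or $r \preceq p^l+1$, or $r \preceq p^{3l}+1$ (in the $p$-adic digit order), then $r$ lies in $C_1 \cup C_{p^l+1} \cup C_{p^{3l}+1}$. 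The Hamming-type weight (sum of $p$-adic digits) of $1$ is $1$, and of $p^l+1$ and $p^{3l}+1$ is $2$ (using $l \neq 0$ and $l \neq$ the excluded fractions, so in particular $p^l+1$ and $p^{3l}+1$ genuinely have two nonzero digits each, both equal to $1$, at distinct positions). The elements $r$ with $r \preceq p^j+1$ are exactly $0$, $1$, $p^j$, and $p^j+1$; now $1 \in C_1$, $p^j \in C_1$, and $p^j+1 \in C_{p^j+1}$, while $0$ is excluded because $0 \preceq s$ for every $s$ yet we need $0 \in \overline{T}$ — this is exactly where one uses that $0 \in \overline{T}$, equivalently that the all-ones vector is a codeword of $\mathbb{C}^{\bot}$ extended, equivalently that the minimum distance bookkeeping forces $0 \in \overline{T}$; since the \emph{complement} $\mathcal{P}\setminus\overline{T}$ must be checked for upward closure and $0 \notin \mathcal{P}\setminus\overline{T}$, the element $0$ never poses a problem. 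Finally, elements below $1$ in the $\preceq$ order are just $0$ and $1$, both handled.

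\textbf{Main obstacle.} The only real subtlety — and the step I expect to require the most care — is pinning down the defining set $\overline{T}$ correctly through the chain of operations dual $\to$ extend, keeping track of signs on exponents ($-1$ versus $1$, etc.) and of the fenced point $0$, and confirming that the exponents $1$, $p^l+1$, $p^{3l}+1$ really do have the claimed $p$-adic digit patterns for all admissible $l$ in the stated range (this is where the hypothesis $l \notin \{\frac{m}{6},\frac{m}{4},\frac{m}{2},\frac{3m}{4},\frac{5m}{6}\}$ and $l \neq 0$ enter, ensuring no digit collisions or coincidences among the three cosets). Once the defining set is correctly identified, the monotonicity check itself is short, since each of the three relevant exponents has $p$-adic weight at most $2$ and the set of $r \preceq s$ for such $s$ is tiny and visibly contained in $C_1 \cup C_{p^l+1} \cup C_{p^{3l}+1}$.
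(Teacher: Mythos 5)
Your overall strategy is the same as the paper's (identify the defining set $\overline{T}$ of $\overline{{\mathbb{C}}^{\bot}}$ and apply Lemma~\ref{Kasami-Lin-Peterson}(1)), and the concrete digit check you perform at the end is the right computation. However, Step~1 contains a genuine error: you have swapped the zeros and nonzeros of ${\mathbb{C}}^{\bot}$. The generator polynomial of ${\mathbb{C}}^{\bot}$ is the reciprocal of the parity-check polynomial of ${\mathbb{C}}$, so its roots are $\alpha^{-e}$ for $e\in C_1\cup C_{p^l+1}\cup C_{p^{3l}+1}$; hence the defining set of ${\mathbb{C}}^{\bot}$ is the \emph{small} set $C_{-1}\cup C_{-(p^l+1)}\cup C_{-(p^{3l}+1)}$ of size $3m=n-\dim{\mathbb{C}}^{\bot}$, not its complement. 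A dimension count exposes the slip: your description would force $\dim{\mathbb{C}}^{\bot}=3m$, whereas $\dim{\mathbb{C}}=3m$ gives $\dim{\mathbb{C}}^{\bot}=n-3m$ (consistent with $\dim{\overline{{\mathbb{C}}^{\bot}}}^{\bot}=3m+1$ in Theorem~\ref{weight1}). The correct defining set of the extension is $\overline{T}=\{0\}\cup C_1\cup C_{p^l+1}\cup C_{p^{3l}+1}$ (up to the sign on exponents, which is harmless here and is obtained most cleanly by computing the dual of the trace code in Eq.(\ref{code-1}) directly), and this is what the paper uses.

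This matters because the criterion you state --- upward closure of $\mathcal{P}\setminus\overline{T}=C_1\cup C_{p^l+1}\cup C_{p^{3l}+1}$ under $\preceq$ --- is simply false: $1\preceq 1+p+\cdots+p^{m-1}$, but the latter has $p$-weight $m>2$ and lies in none of the three cosets. Taken at face value, your setup would therefore ``prove'' that $\overline{{\mathbb{C}}^{\bot}}$ is \emph{not} affine-invariant. What you actually verify in Step~2 --- that $r\preceq s$ for $s\in C_1\cup C_{p^l+1}\cup C_{p^{3l}+1}$ forces $r\in\{0\}\cup C_1\cup C_{p^l+1}\cup C_{p^{3l}+1}$ --- is the \emph{downward} closure of the correct $\overline{T}$, i.e.\ exactly the paper's argument; but it is not the condition your Step~1 set up, so the written proof is internally inconsistent (two reversals cancel in the computation, not in the logic). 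Two minor further points: the hypothesis $l\notin\{\frac{m}{6},\frac{m}{4},\frac{m}{2},\frac{3m}{4},\frac{5m}{6}\}$ plays no role in this lemma (it is needed for the exponential-sum evaluation); all one needs is that every element of $C_{p^l+1}\cup C_{p^{3l}+1}$ has $p$-weight $2$, and even a coincidence of digit positions (a single digit equal to $2$) would leave the downward-closure check intact.
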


\begin{proof}
As one know, $T =C_1 \cup C_{p^l+1}\cup C_{p^{3l}+1}$ is the defining set  of the cyclic code ${\mathbb{C}}^{\bot}$. Since $0 \not \in T$, the defining set $\overline{T}$ of $\overline{{\mathbb{C}}^{\bot}}$ is given by $\overline{T} = C_1 \cup C_{p^l+1}\cup C_{p^{3l}+1} \cup \{0\}$.
Let $v \in \overline{T} $ and $r \in \mathcal{P}$ such that  $r \preceq v$. To applying  Lemma \ref{Kasami-Lin-Peterson}, we  need to prove $r \in \overline{T}$.

If $r=0,$ it is easily seen that $r\in \overline{T}$. Consider now the case $r>0$. If $v \in  C_1$,  the Hamming weight $wt(v) = 1.$ It follows from the assumption $r \preceq v$ that  $wt(r) = 1,$ which  implies that  $r \in C_1 \subset  \overline{T}.$  If $v  \in C_{p^l+1} \cup C_{p^{3l}+1}$, then the Hamming weight $wt(v) = 2.$ Since $r \preceq v$, either $wt(r) = 1$ or $r =v.$ In both   cases, $r \in  \overline{T}.$ The desired conclusion then follows from Lemma \ref{Kasami-Lin-Peterson}.

\end{proof}




\begin{proof}[Proof of Theorem \ref{$2-$design-1}]
Theorem \ref{$2-$design-1} is an immediate result of Theorem \ref{2-design} and Lemma \ref{affine invariant}.
\end{proof}

\begin{proof}[Proof of Theorem \ref{parameter-1}]
By Theorem \ref{design parameter}, it is easy to know that the number of the different supports of all codewords with weight $i\neq 0$ in ${\overline{{\mathbb{C}}^{\bot}}}^{\bot}$ is equal to $A_i/(p-1)$ for each $i,$ where $A_i$ is given in Tables \ref{4}-\ref{6}. Then, from Theorem \ref{$2-$design-1} and  Eq.(\ref{condition}), together with a tedious calculation, we can get the desired conclusions.
\end{proof}

\begin{example}\label{example2}
If $(p, m, l)=(3, 6, 2)$, then the code ${\overline{{\mathbb{C}}^{\bot}}}^{\bot}$ has parameters $[729,19,\\324]$ and weight enumerator $1+3276z^{324}+6552z^{405}+2653560z^{432}+4245696\\z^{459}+171950688z^{468}+343901376z^{477}+116208456z
^{486}+343901376z^{495}+171950688z^{504}+5307120z^{513}+2122848z^{540}+6552z^{567}+3276z^{648}+2z^{729},$ which confirms  the results given in Theorem \ref{weight1}.
\end{example}

\section{Concluding remarks}\label{section-5}
In this paper, via using exponential sums, we first determined the weight distribution of a class of linear codes derived from the cyclic codes related to Dembowski-Ostrom functions. By the properties of affine-invariant codes, we then obtained that ${\overline{{\mathbb{C}}^{\bot}}}^{\bot}$ hold $2$-designs and determined their parameters explicitly. 
However, since the case  for $m=6$ and $i=p^4(p^2-1)$ does not meet the condition of Theorem  \ref{design parameter}, there remain some unsolved problems concerning the parameters of the $2$-designs derived from the supports of all codewords with the weight $i$ in ${\overline{{\mathbb{C}}^{\bot}}}^{\bot}$. This may constitute a challenge for future work.

\section*{Appendix I}
\begin{table}
\begin{center}
\caption{The value distribution of $S(a,b,c)$ when $d'=d$ is odd}\label{1}
\begin{tabular}{ll}
\hline\noalign{\smallskip}
Value  &  Multiplicity   \\
\noalign{\smallskip}
\hline\noalign{\smallskip}
$\varepsilon \sqrt{p^*}p^{\frac{m-1}{2}}$  &  $M_{1\upsilon}=\frac{ p^{m+2d-1}(p^m-p^{m-d}-p^{m-2d}+1)(p^m-1)}{2(p^{2d}-1)}$ \\
$\varepsilon\zeta^j_p\sqrt{p^*}p^{\frac{m-1}{2}}$  &  $ M_{2\upsilon}=\frac{p^{2d}(p^{m-1}+\varepsilon\eta'(-j)p^{\frac{m-1}{2}})(p^m-p^{m-d}-p^{m-2d}+1)
(p^m-1)}{2(p^{2d}-1)} $    \\
$\varepsilon p^{\frac{m+d}{2}}$  &  $M_{3\upsilon}=\frac{1}{2}p^{m-d-1}(p^{\frac{m-d}{2}}+\varepsilon(p-1))(p^{\frac{m-d}{2}}+\varepsilon)(p^m-1) $     \\
$ \varepsilon\zeta^j_p p^{\frac{m+d}{2}} $  &  $M_{4\upsilon}=\frac{1}{2}p^{m-d-1}(p^{\frac{m-d}{2}}-\varepsilon)(p^{\frac{m-d}{2}}+\varepsilon)(p^m-1)$     \\
$  \varepsilon \sqrt{p^*}p^{\frac{m+2d-1}{2}} $  &  $M_{5\upsilon}=\frac{1}{2}p^{m-2d-1}(p^{m-d}-1)/(p^{2d}-1)(p^m-1) $     \\
$ \varepsilon \zeta^j_p\sqrt{p^*}p^{\frac{m+2d-1}{2}}$  &  $M_{6\upsilon}=\frac{(p^{m-2d-1}+\varepsilon\eta'(-j)p^{\frac{m-2d-1}{2}})(p^{m-d}-1)(p^m-1)}{2(p^{2d}-1)} $  \\
$0$ &  $ M_7=(p^{2m-d}-p^{2m-2d}+p^{2m-3d}-p^{m-2d}+1)(p^m-1)$  \\
$p^m$ &  $ 1 $  \\
\noalign{\smallskip}
\hline
\end{tabular}
\end{center}
\end{table}

\begin{table}
\begin{center}
\caption{The value distribution of $S(a,b,c)$ when $d'=d$ is even}\label{2}
\begin{tabular}{ll}
\hline\noalign{\smallskip}
Value  &  Multiplicity   \\
\noalign{\smallskip}
\hline\noalign{\smallskip}
$\varepsilon p^s$  &  $M_{1\upsilon}= \frac{p^{2d}(p^{m-1}+\varepsilon(p-1)p^{s-1})(p^m-p^{m-d}-p^{m-2d}+1)(p^m-1)}{2(p^{2d}-1)}$ \\
$\varepsilon\zeta^j_p p^s$  &  $ M_{2\upsilon}=\frac{p^{2d}(p^{m-1}-\varepsilon p^{s-1})(p^m-p^{m-d}-p^{m-2d}+1)(p^m-1)}{2(p^{2d}-1)} $    \\
$ \varepsilon p^{\frac{m+d}{2}}$  &  $M_{3\upsilon}=\frac{1}{2}p^{m-d-1}(p^{\frac{m-d}
{2}}+\varepsilon(p-1))(p^{\frac{m-d}{2}}+\varepsilon)(p^m-1) $     \\
$ \varepsilon\zeta^j_pp^{\frac{m+d}{2}}$  &  $M_{4\upsilon}=\frac{1}{2}p^{m-d-1}(p^{\frac{m-d}{2}}-\varepsilon)(p^{\frac{m-d}{2}}+\varepsilon)(p^m-1)$     \\
$  \varepsilon p^{\frac{m+2d}{2}}  $  &  $M_{5\upsilon}=\frac{(p^{m-2d-1}+\varepsilon(p-1)p^{\frac{m-2d}{2}-1})(p^{m-d}-1)(p^m-1)}{2(p^{2d}-1)} $     \\
$ \varepsilon \zeta^j_p p^{\frac{m+2d}{2}}$  &  $M_{6\upsilon}=\frac{(p^{m-2d-1}-\varepsilon p^{\frac{m-2d}{2}-1})(p^{m-d}-1)(p^m-1)}{2(p^{2d}-1)} $  \\
$0$ &  $ M_7=(p^{2m-d}-p^{2m-2d}+p^{2m-3d}-p^{m-2d}+1)(p^m-1) $  \\
$p^m$ &  $ 1 $  \\
\noalign{\smallskip}
\hline
\end{tabular}
\end{center}
\end{table}

\begin{table}
\begin{center}
\caption{The value distribution of $S(a,b,c)$ when $d'=2d$}\label{3}
\begin{tabular}{ll}
\hline\noalign{\smallskip}
Value  &  Multiplicity   \\
\noalign{\smallskip}
\hline\noalign{\smallskip}
$\mu p^s$  &  $ M_1=(p^{m-1}+\mu (p-1)p^{s-1})(p^{m+6d}-p^{m+4d}-p^{m+d}+\mu p^{s+5d}$\\&$-\mu p^{s+4d}+p^{6d})(p^m-1)/(p^d+1)(p^{2d}-1)(p^{3d}+1) $    \\
$ \mu\zeta^j_pp^s $  &  $M_2=(p^{m-1}-\mu p^{s-1})(p^{m+6d}-p^{m+4d}-p^{m+d}+\mu p^{s+5d}$\\&$-\mu p^{s+4d}+p^{6d})(p^m-1)/(p^d+1)(p^{2d}-1)(p^{3d}+1) $     \\
$ -\mu p^{s+d}  $  &  $M_3=(p^{m-2d-1}-\mu (p-1)p^{s-d-1})(p^{m+3d}+p^{m+2d}-p^m-$\\&$p^{m-d}-p^{m-2d}-\mu p^{s+3d}+\mu p^s+p^{3d})(p^m-1)/(p^d+1)^2$\\&$(p^{2d}-1)$      \\
$  -\mu \zeta^j_p p^{s+d}$  &  $M_4=(p^{m-2d-1}+\mu p^{s-d-1})(p^{m+3d}+p^{m+2d}-p^m-p^{m-d}$\\&$-p^{m-2d}-\mu p^{s+3d}+\mu p^s+p^{3d})(p^m-1)/(p^d+1)^2(p^{2d}-1) $  \\
$\mu p^{s+2d}$ &  $ M_5=(p^{s-d}+\mu)(p^{s+d}+p^s-p^{s-2d}-\mu p^d)(p^{m-4d-1}+\mu$\\&$(p-1)p^{m-2d-1})(p^m-1)/(p^d+1)^2(p^{2d}-1)$  \\
$\mu \zeta^j_p p^{s+2d}$ &  $ M_6=(p^{s-d}+\mu)(p^{s+d}+p^s-p^{s-2d}-\mu p^d)(p^{m-4d-1}-\mu $\\&$p^{m-2d-1})(p^m-1)/(p^d+1)^2(p^{2d}-1)$  \\
$-\mu p^{s+3d}$ &  $ M_7=(p^{s-2d}-\mu)(p^{s-d}+\mu)(p^{m-6d-1}-\mu (p-1) p^{m-3d-1})$\\&$(p^m-1)/(p^d+1)(p^{2d}-1)(p^{3d}+1)$  \\
$-\mu \zeta^j_p p^{s+3d}$ &  $ M_8=(p^{s-2d}-\mu)(p^{s-d}+\mu)(p^{m-6d-1}+\mu  p^{m-3d-1})(p^m-1)$\\&$/(p^d+1)(p^{2d}-1)(p^{3d}+1)$  \\
$0$ &  $ M_9=(1-\mu p^{3s-d}-\mu p^{3s-8d}+p^{m-d}+$\\&$\frac{p^{2m}+p^{2m-9d}+\mu p^{3s-3d}-\mu p^{3s-5d}-p^{m-4d}-p^{m-6d}}{p^d+1})(p^m-1) $  \\
$p^m$ &  $ 1 $  \\
\noalign{\smallskip}
\hline
\end{tabular}
\end{center}
\end{table}



\section*{ACKNOWLEDGMENTS}
The authors would like to thank the anonymous referees for their helpful comments and suggestions, which have greatly improved the presentation and quality of this paper. The research of X. Du was supported by NSFC No. 61772022. The research of C. Fan was supported by NSFC No. 11971395.



\end{document}